\newtheorem{theorem}{Theorem}
\newtheorem{corollary}[theorem]{Corollary}
\newtheorem{definition}[theorem]{Definition}
\newtheorem{example}[theorem]{Example}
\newtheorem{lemma}[theorem]{Lemma}
\newtheorem{proposition}[theorem]{Proposition}
\newtheorem{remark}[theorem]{Remark}
\newenvironment{proof}[1][Proof]{\noindent\textbf{#1.} }{\ \rule{0.5em}{0.5em}}
\begin{document}

\title{Some examples of BL-algebras using commutative rings}
\author{Cristina Flaut and Dana Piciu}
\date{}
\maketitle

\begin{abstract}
BL-algebras are algebraic structures corresponding to Hajek's basic fuzzy
logic.

The aim of this paper is to analize the structure of BL-algebras using
commutative rings. From computational considerations, we are very interested
in the finite case. We present new ways to generate finite BL-algebras using
commutative rings and we give summarizing statistics.

Furthermore, we investigated BL-rings, i.e., commutative rings whose the
lattice of ideals can be equiped with a structure of BL-algebra. A new
characterization for these rings and their connections to other classes of
rings are established. Also, we give examples of finite BL-rings for which
their lattice of ideals is not an MV-algebra and using these rings we
construct BL-algebras with $2^{r}+1$ elements, $r\geq 2,$ and \ all
BL-chains with $k$ elements, $k\geq 4.$

\textbf{Keywords:} commutative ring, BL-ring, ideal, residuated lattice,
MV-algebra, BL-algebra.

\textbf{AMS Subject Classification 2010:} 03G10, 03G25, 06A06, 06D05, 08C05,
06F35.
\end{abstract}

\section{\textbf{Introduction}}

The origin of residuated lattices is in Mathematical Logic. They have been
introduced by Dilworth and Ward, through the papers \cite{[Di; 38]}, \cite%
{[WD; 39]}.\medskip\ The study of residuated lattices is originated in 1930
in the context of theory of rings, with the study of ring ideals. It is
known that the lattice of ideals of a commutative ring is a residuated
lattice, see \cite{[Bl; 53]}. Several researchers (\cite{[Bl; 53]}, \cite%
{[BN; 09]}, \cite{[LN; 18]}, \cite{[TT; 22]}, etc) have been interested in
this construction.

Two important subvarieties of residuated lattices are BL-algebras
(corresponding to Hajek's logic, see \cite{[H; 98]}) and MV-algebras
(corresponding to \L ukasiewicz many-valued logic, see \cite{[COM; 00]}, 
\cite{[CHA; 58]}). For instance, rings for which the lattice of ideals is a
BL-algebra are called BL-rings and are introduced in \cite{[LN; 18]}.

In this paper, we obtain a description for BL-rings, using a new
characterization of BL-algebras, given in Theorem \ref{Theorem_1}, i.e,
residuated lattices $L$ in which $\ [x\odot (x\rightarrow y)]\rightarrow
z=(x\rightarrow z)\vee (y\rightarrow z)$ for every $x,y,z\in L.$ Then,
BL-rings are unitary and commutative rings $A$ with the property that $K:$%
\textit{\ }$[I\otimes (J:I)]=(K:I)+(K:J),$ \textit{for every }$I,J,K\in
Id(A),$\ see Corollary \ref{Corollary 3.6}

Also, we show that the class of BL-rings contains other known classes of
commutative rings: rings which are principal ideal domains and some types of
finite unitary commutative rings, see Theorem \ref{Theorem_2}, Corollary \ref%
{Corollary_2_1} and Corollary \ref{Corollary _3_4}.

One of recent applications of BCK algebras is given by Coding Theory. \ From
computational considerations, we are interested, in this paper, to find ways
to generate finite BL-algebras using finite commutative rings, since a
solution that is computational tractable is to consider algebras with a
small number of elements. First, we give examples of finite BL-rings whose
lattice of ideals is not an MV-algebra. Using these rings we construct
BL-algebras with $2^{r}+1$ elements, $r\geq 2$ (see Proposition \ref%
{Proposition10}) and BL-chains with $k\geq 4$ elements (see Proposition \ref%
{Proposition11}). Also, in Theorem \ref{Theorem_4}, we present a way to
generate all (up to an isomorphism) finite BL-algebras with $2\leq n\leq 5$
elements, by using the ordinal product of residuated lattices and we present
summarizing statistics.

\section{\textbf{Preliminaries}}

\begin{definition}
\label{Definition_1} (\cite{[Di; 38]}, \cite{[WD; 39]})\textbf{\ \ }A \emph{%
(commutative) residuated lattice} \ is an algebra $(L,\wedge ,\vee ,\odot
,\rightarrow ,0,1)$ such that:

(LR1) $\ (L,\wedge ,\vee ,0,1)$ is a bounded lattice;

(LR2) $\ \ (L,\odot ,1)$ is a commutative ordered monoid;

(LR3) $\ z\leq x\rightarrow y$\ iff $x\odot z\leq y,$\ for all $x,y,z\in L.$
\end{definition}

The property (LR3) is called\emph{\ residuation}, where $\leq $ is the
partial order of the lattice $\ (L,\wedge ,\vee ,0,1).$

In a residuated lattice is defined an additional operation: for $x\in L,$ we
denote $x^{\ast }=x\rightarrow 0.$

\begin{example}
\label{Example_1} (\cite{[T; 99]}) Let $(\mathcal{B},\wedge ,\vee ,^{\prime
},0,1)$ be a \emph{Boolean algebra.} \ If we define for every $x,y\in 
\mathcal{B},x\odot y=x\wedge y$ and $x\rightarrow y=x^{\prime }\vee y,$ then 
$(\mathcal{B},\wedge ,\vee ,\odot ,\rightarrow ,0,1)$ becomes a residuated
lattice.
\end{example}

\begin{example}
\label{Example_2} It is known that, for a commutative unitary ring, $A,$ if
we denote by $Id\left( A\right) $ the set of all ideals, then for $I,J\in
Id\left( A\right) $, the following sets 
\begin{equation*}
I+J=<I\cup J>=\{i+j,i\in I,j\in J\}\text{, }
\end{equation*}%
\begin{equation*}
I\otimes J=\{\underset{k=1}{\overset{n}{\sum }}i_{k}j_{k},\text{ }i_{k}\in
I,j_{k}\in J\}\text{, }
\end{equation*}%
\begin{equation*}
\left( I:J\right) =\{x\in A,x\cdot J\subseteq I\}\text{,}
\end{equation*}%
\begin{equation*}
Ann\left( I\right) =\left( \mathbf{0}:I\right) \text{, where }\mathbf{0}=<0>,%
\text{ }
\end{equation*}%
\textbf{\ }are also ideals of $A,$ called sum, product, quotient and
annihilator, see \cite{[BP; 02]}. If we preserve these notations, $%
(Id(A),\cap ,+,\otimes \rightarrow ,0=\{0\},1=A)$ is a residuated lattice in
which the order relation is $\subseteq $ and $I\rightarrow J=(J:I),$ for
every $I,J\in Id(A)$, see \cite{[TT; 22]}.
\end{example}

In a residuated lattice $(L,\wedge ,\vee ,\odot ,\rightarrow ,0,1)$ we
consider the following identities:

\begin{equation*}
(prel)\qquad (x\rightarrow y)\vee (y\rightarrow x)=1\qquad \text{ (\textit{%
prelinearity)}};
\end{equation*}%
\begin{equation*}
(div)\qquad x\odot (x\rightarrow y)=x\wedge y\qquad \text{ (\textit{%
divisibility)}}.
\end{equation*}

\begin{definition}
\label{Definition_2} (\cite{[NL; 03]}, \cite{[I; 09]}, \cite{[T; 99]}) \ A
residuated lattice $L$ is called \emph{a BL-algebra}\textit{\ }if $L$
verifies $(prel)+(div)$ conditions.
\end{definition}

A \emph{BL-chain }is a totally ordered BL-algebra, i.e., a BL-algebra such
that its lattice order is total.

\begin{definition}
\label{Definition_3} (\cite{[COM; 00]}, \cite{[CHA; 58]}) An \emph{MV-algebra%
} is an algebra $\left( L,\oplus ,^{\ast },0\right) $ satisfying the
following axioms:

(MV1) $\left( L,\oplus ,0\right) $ \ is an abelian monoid;

(MV2) $(x^{\ast })^{\ast }=x;$

(MV3) $x\oplus 0^{\ast }=0^{\ast };$

(MV4) $\left( x^{\ast }\oplus y\right) ^{\ast }\oplus y=$ $\left( y^{\ast
}\oplus x\right) ^{\ast }\oplus x$, for all $x,y\in L.$
\end{definition}

In fact, a residuated lattice $L$ is an MV-algebra iff it satisfies the
additional condition: 
\begin{equation*}
(x\rightarrow y)\rightarrow y=(y\rightarrow x)\rightarrow x,
\end{equation*}%
for every $x,y\in L,$ see \cite{[T; 99]}.

\begin{remark}
\label{Remark_1} (\cite{[T; 99]}) \ If in a BL- algebra $L$, $x^{\ast \ast
}=x,$ for every $x\in L$, and for $x,y\in L$ we denote 
\begin{equation*}
x\oplus y=(x^{\ast }\odot y^{\ast })^{\ast },
\end{equation*}%
then we obtain an MV-algebra $(L,\oplus ,^{\ast },0).$ Conversely, if $%
(L,\oplus ,^{\ast },0)$ is an MV\textit{-algebra}, then $(L,\wedge ,\vee
,\odot ,\rightarrow ,0,1)$ becomes a BL-algebra, in which for $x,y\in L:$ 
\begin{equation*}
x\odot y=(x^{\ast }\oplus y^{\ast })^{\ast },
\end{equation*}%
\begin{equation*}
x\rightarrow y=x^{\ast }\oplus y,1=0^{\ast },
\end{equation*}%
\begin{equation*}
x\vee y=(x\rightarrow y)\rightarrow y=(y\rightarrow x)\rightarrow x\text{
and }x\wedge y=(x^{\ast }\vee y^{\ast })^{\ast }.
\end{equation*}%
In fact, MV-algebras are exactly involutive BL-algebras.
\end{remark}

\begin{example}
\label{Example_3}\emph{(}\cite{[I; 09]}\emph{) }\textit{\ }We give an
example of a finite BL-algebra which is not an MV-algebra. Let $%
L=\{0,a,b,c,1\}$ and we define on $L$ the following operations: 
\begin{equation*}
\begin{array}{c|ccccc}
\rightarrow & 0 & c & a & b & 1 \\ \hline
0 & 1 & 1 & 1 & 1 & 1 \\ 
c & 0 & 1 & 1 & 1 & 1 \\ 
a & 0 & b & 1 & b & 1 \\ 
b & 0 & a & a & 1 & 1 \\ 
1 & 0 & c & a & b & 1%
\end{array}%
,\hspace{5mm}%
\begin{array}{c|ccccc}
\odot & 0 & c & a & b & 1 \\ \hline
0 & 0 & 0 & 0 & 0 & 0 \\ 
c & 0 & c & c & c & c \\ 
a & 0 & c & a & c & a \\ 
b & 0 & c & c & b & b \\ 
1 & 0 & c & a & b & 1%
\end{array}%
.
\end{equation*}
\end{example}

We have, $0\leq c\leq a,b\leq 1,$ but $a,b$ are incomparable, hence $L$ is a
BL-algebra which is not a chain. We remark that $x^{\ast \ast }=1,$ for
every $x\in L,x\neq 0.$

\section{BL-rings}

It is known that if\textbf{\ } $A$ is a commutative unitary ring, then $%
(Id(A),\cap ,+,\otimes \rightarrow ,0=\{0\},1=A)$ is a residuated lattice,
see \cite{[TT; 22]}.

\begin{definition}
\label{Definition_4} (\cite{[LN; 18]}) \ A commutative ring whose lattice of
ideals is a BL-algebra is called \emph{a BL-ring.}
\end{definition}

BL-rings are closed under finite direct products, arbitrary direct sums and
homomorphic images, see \cite{[LN; 18]}.

In the following, using the connections between BL-algebras and BL-rings, we
give a new characterizations for commutative and unitary rings for which the
lattice of ideals is a BL-algebra.

\begin{proposition}
\label{Proposition_3} (\cite{[I; 09]}) \textit{\ Let }$(L,\vee ,\wedge
,\odot ,\rightarrow ,0,1)$\textit{\ be a residuated lattice. Then we have
the equivalences:}

\textit{(i) }$L\ $satisfies $(prel)$\textit{\ condition;}

\textit{(ii) }$\ \ (x\wedge y)\rightarrow z=(x\rightarrow z)\vee
(y\rightarrow z),$ \textit{for every \thinspace }$x,y,z\in L.$
\end{proposition}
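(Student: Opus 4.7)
The plan is to prove the two directions separately, and both rely on standard residuated-lattice identities: monotonicity of $\rightarrow$ in each argument, $x\rightarrow(a\wedge b)=(x\rightarrow a)\wedge(x\rightarrow b)$, residuation $(x\odot z\leq y\Leftrightarrow z\leq x\rightarrow y)$, and distributivity of $\odot$ over $\vee$.

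For the direction (ii)$\Rightarrow$(i), my strategy is to specialize $z$ to $x\wedge y$. The left-hand side collapses to $(x\wedge y)\rightarrow(x\wedge y)=1$. On the right-hand side, I apply the identity $u\rightarrow(a\wedge b)=(u\rightarrow a)\wedge(u\rightarrow b)$ with $u=x$ (respectively $u=y$): since $x\rightarrow x=1$, we get $x\rightarrow(x\wedge y)=x\rightarrow y$, and symmetrically $y\rightarrow(x\wedge y)=y\rightarrow x$. Substituting back yields $(x\rightarrow y)\vee(y\rightarrow x)=1$, which is exactly $(prel)$.

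For the direction (i)$\Rightarrow$(ii), the inequality $(x\rightarrow z)\vee(y\rightarrow z)\leq(x\wedge y)\rightarrow z$ is free, since $x\wedge y\leq x$ gives $x\rightarrow z\leq(x\wedge y)\rightarrow z$, and similarly on the other side. For the reverse inequality, the key observation is that $x\odot(x\rightarrow y)\leq y$ (by residuation) and also $x\odot(x\rightarrow y)\leq x$ (since $x\rightarrow y\leq 1$), so $x\odot(x\rightarrow y)\leq x\wedge y$. Applying $(\cdot)\rightarrow z$ and using residuation (via $(a\odot b)\rightarrow c=a\rightarrow(b\rightarrow c)$) then gives $(x\rightarrow y)\odot\bigl[(x\wedge y)\rightarrow z\bigr]\leq x\rightarrow z$, and by symmetry $(y\rightarrow x)\odot\bigl[(x\wedge y)\rightarrow z\bigr]\leq y\rightarrow z$. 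Multiplying the prelinearity identity $(x\rightarrow y)\vee(y\rightarrow x)=1$ by $(x\wedge y)\rightarrow z$ on the right and using distributivity of $\odot$ over $\vee$ produces
\begin{equation*}
(x\wedge y)\rightarrow z=\bigl[(x\rightarrow y)\odot((x\wedge y)\rightarrow z)\bigr]\vee\bigl[(y\rightarrow x)\odot((x\wedge y)\rightarrow z)\bigr]\leq(x\rightarrow z)\vee(y\rightarrow z),
\end{equation*}
which completes the proof.

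The main obstacle, as I see it, is the reverse inequality in (i)$\Rightarrow$(ii): one needs the correct "multiply the prelinearity by $1$" trick and has to recognize that $x\odot(x\rightarrow y)\leq x\wedge y$ (not just $\leq y$) so that the residuation argument passes through. Once that inequality is in hand, distributivity of $\odot$ over $\vee$ does the rest mechanically.
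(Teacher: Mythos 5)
Your proof is correct. Note that the paper does not actually prove this proposition --- it is quoted from Iorgulescu's book \cite{[I; 09]} --- so there is nothing internal to compare against; your argument is the standard one. Both halves check out: the specialization $z=x\wedge y$ together with $u\rightarrow(a\wedge b)=(u\rightarrow a)\wedge(u\rightarrow b)$ gives (ii)$\Rightarrow$(i) (this is in fact exactly the computation the authors perform in their proof of Lemma \ref{Lemma_0}), and for (i)$\Rightarrow$(ii) the chain $x\odot(x\rightarrow y)\leq x\wedge y$, hence $(x\rightarrow y)\odot[(x\wedge y)\rightarrow z]\leq x\rightarrow z$ by residuation, combined with multiplying $(x\rightarrow y)\vee(y\rightarrow x)=1$ into $(x\wedge y)\rightarrow z$ and distributing $\odot$ over $\vee$, is exactly the right mechanism; all the identities you invoke (antitonicity in the first argument, $(a\odot b)\rightarrow c=a\rightarrow(b\rightarrow c)$, and $\odot$ preserving joins) hold in every residuated lattice.
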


\begin{lemma}
\label{Lemma_0} \textit{\ Let }$(L,\vee ,\wedge ,\odot ,\rightarrow ,0,1)$%
\textit{\ be a residuated lattice. The following assertions are equivalent:}

\textit{(i) }$L$ satisfies $(prel)$\textit{\ condition;}

\textit{(ii) }$\ \ $F\textit{or every \thinspace }$x,y,z\in L,$ if $\
x\wedge y\leq z,$ then $(x\rightarrow z)\vee (y\rightarrow z)=1.$\textit{\ }
\end{lemma}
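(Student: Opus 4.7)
The plan is to prove both implications directly, using Proposition \ref{Proposition_3} for the forward direction and a clever substitution for the reverse direction.

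For (i) $\Rightarrow$ (ii), I would invoke Proposition \ref{Proposition_3} immediately: assuming $(prel)$, we have $(x\wedge y)\rightarrow z=(x\rightarrow z)\vee (y\rightarrow z)$ for all $x,y,z\in L$. If moreover $x\wedge y\leq z$, then by the standard residuated-lattice fact that $a\leq b$ iff $a\rightarrow b=1$, we get $(x\wedge y)\rightarrow z=1$, and hence $(x\rightarrow z)\vee (y\rightarrow z)=1$. This direction is essentially a one-line consequence of the previous proposition.

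For (ii) $\Rightarrow$ (i), the idea is to specialize condition (ii) to the case $z=x\wedge y$. Since trivially $x\wedge y\leq x\wedge y$, assumption (ii) yields
\begin{equation*}
(x\rightarrow (x\wedge y))\vee (y\rightarrow (x\wedge y))=1.
\end{equation*}
Now I would use the well-known identity $a\rightarrow (b\wedge c)=(a\rightarrow b)\wedge (a\rightarrow c)$, valid in any residuated lattice, together with $a\rightarrow a=1$, to compute $x\rightarrow (x\wedge y)=(x\rightarrow x)\wedge (x\rightarrow y)=x\rightarrow y$ and similarly $y\rightarrow (x\wedge y)=y\rightarrow x$. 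Substituting these back gives $(x\rightarrow y)\vee (y\rightarrow x)=1$, which is exactly $(prel)$.

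There is no real obstacle here: the only point worth checking is the distributivity of $\rightarrow$ over $\wedge$ on the right, but this is one of the basic identities in any (commutative) residuated lattice and can be cited directly. The substitution $z:=x\wedge y$ is the key observation that makes condition (ii), which looks weaker than Proposition \ref{Proposition_3}(ii), actually strong enough to recover prelinearity.
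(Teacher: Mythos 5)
Your proof is correct and follows essentially the same route as the paper: the forward direction is the immediate consequence of Proposition \ref{Proposition_3} combined with $a\leq b$ iff $a\rightarrow b=1$, and the reverse direction uses the same substitution $z:=x\wedge y$ together with the identity $a\rightarrow (b\wedge c)=(a\rightarrow b)\wedge (a\rightarrow c)$ to reduce to $(x\rightarrow y)\vee (y\rightarrow x)=1$. Nothing to add.
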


\begin{proof}
$(i)\Rightarrow (ii).$ Following by Proposition \ref{Proposition_3}.

$(ii)\Rightarrow (i).$ $\ $Using (ii), for $z=$ $x\wedge y$ we deduce that $%
1=(x\rightarrow (x\wedge y))\vee (y\rightarrow (x\wedge y))=[(x\rightarrow
x)\wedge (x\rightarrow y)]\vee \lbrack (y\rightarrow x)\wedge (y\rightarrow
y)]$ $=(x\rightarrow y)\vee (y\rightarrow x),$ so, $L$ satisfies $(prel)$%
\textit{\ condition.}
\end{proof}

\begin{lemma}
\label{Lemma_1} \textit{\ Let }$(L,\vee ,\wedge ,\odot ,\rightarrow ,0,1)$%
\textit{\ be a residuated lattice. The following assertions are equivalent:}

\textit{(i) }$L$ satisfies $(div)$\textit{\ condition;}

\textit{(ii) }$\ \ $F\textit{or every \thinspace }$x,y,z\in L,$ if $x\odot
(x\rightarrow y)\leq z,$ then $x\wedge y\leq z.$\textit{\ }
\end{lemma}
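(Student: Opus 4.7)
My plan is to prove the two implications separately, with the reverse implication being the one requiring a little trick (specializing $z$).

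For $(i) \Rightarrow (ii)$, I would simply observe that if divisibility holds then $x\odot(x\rightarrow y)$ and $x\wedge y$ are literally the same element, so the implication in (ii) degenerates to the tautology $x\wedge y \leq z \Rightarrow x\wedge y \leq z$. This step is a one-liner and mirrors the analogous direction in Lemma \ref{Lemma_0}.

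For $(ii) \Rightarrow (i)$, I need to establish the identity $x\odot(x\rightarrow y) = x\wedge y$. The inequality $x\odot(x\rightarrow y) \leq x\wedge y$ holds in every residuated lattice: residuation applied to $x\rightarrow y \leq x\rightarrow y$ gives $x\odot(x\rightarrow y)\leq y$, and $x\odot(x\rightarrow y)\leq x\odot 1 = x$ since $\odot$ is monotone and $x\rightarrow y\leq 1$. To obtain the reverse inequality $x\wedge y \leq x\odot(x\rightarrow y)$, I would apply the hypothesis (ii) with the specific choice $z := x\odot(x\rightarrow y)$. Since $x\odot(x\rightarrow y) \leq z$ holds trivially by reflexivity, (ii) immediately yields $x\wedge y \leq z = x\odot(x\rightarrow y)$, which is exactly what is needed.

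The only slightly delicate point is recognizing that the right instantiation is $z := x\odot(x\rightarrow y)$, which turns the hypothesis of (ii) into a triviality and hands us the nontrivial half of $(div)$ for free; this is exactly parallel to the substitution $z := x\wedge y$ used in Lemma \ref{Lemma_0}. I do not foresee any real obstacle: the proof uses only residuation, monotonicity of $\odot$, and the reflexivity of $\leq$.
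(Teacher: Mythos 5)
Your proof is correct and follows exactly the paper's own argument: the forward direction is immediate from $(div)$, and the converse instantiates (ii) at $z=x\odot(x\rightarrow y)$ and combines the result with the inequality $x\odot(x\rightarrow y)\leq x\wedge y$ valid in any residuated lattice. The extra justification you give for that inequality is a welcome but inessential elaboration.
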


\begin{proof}
$(i)\Rightarrow (ii).$ Obviously.

$(ii)\Rightarrow (i).$ $\ $Using (ii), for $z=$ $x\odot (x\rightarrow y)$ we
deduce that $x\wedge y\leq x\odot (x\rightarrow y).$ Since in a residuated
lattice, $x\odot (x\rightarrow y)\leq x\wedge y$ we deduce that $L$
satisfies $(div)$\textit{\ condition.}
\end{proof}

Using Lemma \ref{Lemma_0} and Lemma \ref{Lemma_1} we deduce that:

\begin{proposition}
\label{Proposition_4} \textit{Let }$(L,\vee ,\wedge ,\odot ,\rightarrow
,0,1) $\textit{\ be a residuated lattice. The following assertions are
equivalent:}

(i) $L$ is a BL-algebra;

(ii) For every $x,y,z\in L,$ if $x\odot (x\rightarrow y)\leq z,$ then $%
(x\rightarrow z)\vee (y\rightarrow z)=1;$

(iii) $[x\odot (x\rightarrow y)]\rightarrow z=(x\rightarrow z)\vee
(y\rightarrow z),$ for every $x,y,z\in L.$
\end{proposition}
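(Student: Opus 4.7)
The plan is to prove the cycle $(i)\Rightarrow(iii)\Rightarrow(ii)\Rightarrow(i)$, relying on Lemma~\ref{Lemma_0} and Lemma~\ref{Lemma_1} for the return leg, as the author signals.

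The implications $(i)\Rightarrow(iii)$ and $(iii)\Rightarrow(ii)$ are short. For $(i)\Rightarrow(iii)$, I would use $(div)$ to rewrite $x\odot(x\rightarrow y)$ as $x\wedge y$, so the left-hand side of (iii) becomes $(x\wedge y)\rightarrow z$, and then invoke Proposition~\ref{Proposition_3} (which gives $(x\wedge y)\rightarrow z=(x\rightarrow z)\vee(y\rightarrow z)$ once $(prel)$ is assumed). For $(iii)\Rightarrow(ii)$, simply note that if $x\odot(x\rightarrow y)\leq z$, then $[x\odot(x\rightarrow y)]\rightarrow z=1$, and (iii) yields $(x\rightarrow z)\vee(y\rightarrow z)=1$.

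The main work is $(ii)\Rightarrow(i)$, where I must extract both $(prel)$ and $(div)$ from the single conditional statement (ii). For $(prel)$, I would verify the hypothesis of Lemma~\ref{Lemma_0}(ii): suppose $x\wedge y\leq z$; since in any residuated lattice $x\odot(x\rightarrow y)\leq x\wedge y$, we also have $x\odot(x\rightarrow y)\leq z$, so (ii) applies and gives $(x\rightarrow z)\vee(y\rightarrow z)=1$. Lemma~\ref{Lemma_0} then delivers $(prel)$.

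For $(div)$, I would verify the hypothesis of Lemma~\ref{Lemma_1}(ii): suppose $x\odot(x\rightarrow y)\leq z$, so by (ii), $(x\rightarrow z)\vee(y\rightarrow z)=1$. Using that $\odot$ distributes over $\vee$ in any residuated lattice and that $u\odot 1=u$, compute
\begin{equation*}
x\wedge y=(x\wedge y)\odot 1=(x\wedge y)\odot\bigl[(x\rightarrow z)\vee(y\rightarrow z)\bigr]
\end{equation*}
\begin{equation*}
=\bigl[(x\wedge y)\odot(x\rightarrow z)\bigr]\vee\bigl[(x\wedge y)\odot(y\rightarrow z)\bigr]\leq\bigl[x\odot(x\rightarrow z)\bigr]\vee\bigl[y\odot(y\rightarrow z)\bigr]\leq z\vee z=z,
\end{equation*}
so $x\wedge y\leq z$ and Lemma~\ref{Lemma_1} finishes the job. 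The step I expect to be the delicate one is precisely this last computation: it is the only place where one must do more than unwrap definitions, since one must know to multiply $x\wedge y$ by the identity $1=(x\rightarrow z)\vee(y\rightarrow z)$ and then exploit distributivity of $\odot$ over $\vee$ together with the basic identity $u\odot(u\rightarrow v)\leq v$.
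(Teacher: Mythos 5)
Your proof is correct, but it is organized differently from the paper's, and in one place it is genuinely slicker. The paper proves the two biconditionals $(i)\Leftrightarrow(ii)$ and $(ii)\Leftrightarrow(iii)$ separately; its hardest step is $(ii)\Rightarrow(iii)$, which it handles by introducing $t=[x\odot(x\rightarrow y)]\rightarrow z$, showing $x\odot(x\rightarrow y)\leq t\rightarrow z$, applying (ii), and then squeezing $t$ between $(x\rightarrow z)\vee(y\rightarrow z)$ and $[x\odot(x\rightarrow y)]\rightarrow z$. By proving the cycle $(i)\Rightarrow(iii)\Rightarrow(ii)\Rightarrow(i)$ instead, you bypass that computation entirely: your $(i)\Rightarrow(iii)$ is a one-liner, since $(div)$ turns the left side of (iii) into $(x\wedge y)\rightarrow z$ and Proposition \ref{Proposition_3} finishes it. Your $(ii)\Rightarrow(i)$ handles $(prel)$ exactly as the paper does (via Lemma \ref{Lemma_0}). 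For $(div)$ your argument is valid --- distributivity of $\odot$ over $\vee$ and $u\odot(u\rightarrow v)\leq v$ both hold in any residuated lattice, so the chain $x\wedge y=(x\wedge y)\odot[(x\rightarrow z)\vee(y\rightarrow z)]\leq z$ goes through --- but it is heavier than necessary: the paper simply notes the standard inequality $(x\rightarrow z)\vee(y\rightarrow z)\leq(x\wedge y)\rightarrow z$, so that $(x\rightarrow z)\vee(y\rightarrow z)=1$ immediately forces $(x\wedge y)\rightarrow z=1$, i.e.\ $x\wedge y\leq z$. That observation would let you delete the multiplication-by-$1$ computation you flagged as the delicate step. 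Net effect: your route trades the paper's technical $(ii)\Rightarrow(iii)$ for a slightly more laborious extraction of $(div)$, and overall is at least as clean.
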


\begin{proof}
$(i)\Rightarrow (ii).$ Let $x,y,z\in L$ such that $x\odot (x\rightarrow
y)\leq z.$ Since every BL-algebra satisfies $(div)$\textit{\ condition, }by
Lemma \ref{Lemma_1}, we deduce that $x\wedge y\leq z.$ Since every
BL-algebra satisfies $(prel)$\textit{\ condition, }following Lemma \ref%
{Lemma_0}, we deduce that $1=(x\rightarrow z)\vee (y\rightarrow z).$

$(ii)\Rightarrow (i).$ First, we prove that $L$ satisfies condition (ii)
from Lemma \ref{Lemma_0}. So, let $x,y,z\in L$ such that $x\wedge y\leq z.$
Thus, $(x\wedge y)\rightarrow z=1$ \ Since $x\odot (x\rightarrow y)\leq
x\wedge y,$ we deduce that $1=(x\wedge y)\rightarrow z\leq (x\odot
(x\rightarrow y))\rightarrow z.$ Then, $x\odot (x\rightarrow y)\leq z.$ By
hypothesis, $(x\rightarrow z)\vee (y\rightarrow z)=1.$

To prove that $L$ verifies condition $(ii)$ from Lemma \ref{Lemma_1}, let $%
x,y,z\in L$ such that $x\odot (x\rightarrow y)\leq z.$ By hypothesis, we
deduce that, $(x\rightarrow z)\vee (y\rightarrow z)=1.$ Since $(x\rightarrow
z)\vee (y\rightarrow z)\leq (x\wedge y)\rightarrow z$ we obtain $(x\wedge
y)\rightarrow z=1,$ that is, $x\wedge y\leq z.$\textit{\ }

$(iii)\Rightarrow (ii).$ Obviously.

$(ii)\Rightarrow (iii).$ If we denote $t=[x\odot (x\rightarrow
y)]\rightarrow z,$ we have $1=t\rightarrow t=t\rightarrow \lbrack (x\odot
(x\rightarrow y))\rightarrow z]=[x\odot (x\rightarrow y)]\rightarrow
(t\rightarrow z)$, hence, $x\odot (x\rightarrow y)\leq t\rightarrow z.$

By hypothesis, we deduce that, $(x\rightarrow (t\rightarrow z))\vee
(y\rightarrow (t\rightarrow z))=1.$

Then $1=(t\rightarrow (x\rightarrow z))\vee (t\rightarrow (y\rightarrow
z))\leq t\rightarrow \lbrack (x\rightarrow z)\vee (y\rightarrow z)].$ Thus, $%
t\leq (x\rightarrow z)\vee (y\rightarrow z).$

But $(x\rightarrow z)\vee (y\rightarrow z)\leq (x\wedge y)\rightarrow z\leq
\lbrack x\odot (x\rightarrow y)]\rightarrow z=t.$

We conclude that $t=(x\rightarrow z)\vee (y\rightarrow z),$ that is, $%
[x\odot (x\rightarrow y)]\rightarrow z=(x\rightarrow z)\vee (y\rightarrow
z), $ for every $x,y,z\in L.$
\end{proof}

Using Proposition \ref{Proposition_4} we obtain a new characterisation for
BL-algebras:

\begin{theorem}
\label{Theorem_1}\textit{A residuated lattice }$L$ is a BL-algebra if and
only if for every $x,y,z\in L,$ 
\begin{equation*}
\lbrack x\odot (x\rightarrow y)]\rightarrow z=(x\rightarrow z)\vee
(y\rightarrow z).
\end{equation*}
\end{theorem}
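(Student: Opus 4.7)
The plan is to observe that this theorem is essentially an immediate corollary of Proposition~\ref{Proposition_4}, which has already established the equivalence of three conditions on a residuated lattice, including (i) being a BL-algebra and (iii) the identity $[x\odot (x\rightarrow y)]\rightarrow z=(x\rightarrow z)\vee (y\rightarrow z)$. Thus my first sentence would simply invoke Proposition~\ref{Proposition_4}, namely the equivalence (i)$\Leftrightarrow$(iii), and the proof is complete.

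If one wanted a self-contained argument without citing Proposition~\ref{Proposition_4}, the strategy would be to run exactly the two implications that (i)$\Leftrightarrow$(iii) decomposes into via (ii). For the forward direction, assume $L$ is a BL-algebra. Then by \emph{divisibility}, $x\odot(x\rightarrow y)=x\wedge y$, so $[x\odot(x\rightarrow y)]\rightarrow z=(x\wedge y)\rightarrow z$, and by Proposition~\ref{Proposition_3} (the consequence of \emph{prelinearity}), $(x\wedge y)\rightarrow z=(x\rightarrow z)\vee (y\rightarrow z)$, yielding the desired identity.

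For the reverse direction, assume the identity holds for all $x,y,z$. Two things must be checked: \emph{divisibility} and \emph{prelinearity}. Setting $z=x\odot(x\rightarrow y)$ in the identity gives $1=(x\rightarrow[x\odot(x\rightarrow y)])\vee (y\rightarrow[x\odot(x\rightarrow y)])$, and combining this with the standard inequality $(a\rightarrow b)\vee(c\rightarrow b)\leq (a\wedge c)\rightarrow b$ and general residuated lattice identities should force $x\wedge y\leq x\odot(x\rightarrow y)$, which together with the always-true reverse inequality yields $(div)$. For $(prel)$, specialize the identity with $z=y$ or use Lemma~\ref{Lemma_0} after establishing condition (ii) there. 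This is precisely the route that Proposition~\ref{Proposition_4} has already executed, so the cleanest route is to cite it rather than redo the bookkeeping.

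The main (really the only) conceptual obstacle would be the reverse direction if one wrote it from scratch, where one has to extract both prelinearity and divisibility from a single combined identity; the trick of plugging $z=x\odot(x\rightarrow y)$ (to get divisibility) and then a suitable second specialization (to get prelinearity) is exactly what Proposition~\ref{Proposition_4} packages cleanly. Since that work is already done, my proof will be a one-line appeal to Proposition~\ref{Proposition_4}.
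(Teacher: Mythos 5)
Your proposal is correct and matches the paper exactly: the paper derives Theorem~\ref{Theorem_1} as an immediate consequence of the equivalence (i)$\Leftrightarrow$(iii) in Proposition~\ref{Proposition_4}, with no further argument. The additional self-contained sketch you give (divisibility plus Proposition~\ref{Proposition_3} for the forward direction, and the specialization $z=x\odot(x\rightarrow y)$ for the converse) is consistent with how Proposition~\ref{Proposition_4} itself is proved, so nothing is missing.
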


Using this result, we can give a new description for BL-rings:

\begin{corollary}
\label{Corollary 3.6}\textit{\ Let }$A$\textit{\ be a commutative and
unitary ring. The following assertions are equivalent:}

\textit{(i) }$\ A$ \textit{is a BL-ring;}

\textit{(ii) \ }$K:$\textit{\ }$[I\otimes (J:I)]=(K:I)+(K:J),$ \textit{for
every }$I,J,K\in Id(A).$
\end{corollary}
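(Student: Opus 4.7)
The plan is to apply Theorem~\ref{Theorem_1} directly to the residuated lattice $(Id(A),\cap ,+,\otimes ,\rightarrow ,\{0\},A)$ recalled in Example~\ref{Example_2}, and then simply read off what the characterizing identity becomes in ring-theoretic language.

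First I would invoke Definition~\ref{Definition_4}, which says that $A$ is a BL-ring iff the residuated lattice $Id(A)$ is a BL-algebra. Then I would set up the dictionary between the abstract residuated-lattice operations and their concrete counterparts on $Id(A)$: the meet $\wedge $ is $\cap $, the join $\vee $ is the sum $+$, the monoid product $\odot $ is the ideal product $\otimes $, the constants $0,1$ are $\{0\},A$, and crucially the implication is the quotient ideal, $I\rightarrow J=(J:I)$. With this in hand, substituting $x=I$, $y=J$, $z=K$ in the identity
\[
[x\odot (x\rightarrow y)]\rightarrow z=(x\rightarrow z)\vee (y\rightarrow z)
\]
of Theorem~\ref{Theorem_1}, each piece translates: $x\rightarrow y$ becomes $(J:I)$, $x\odot (x\rightarrow y)$ becomes $I\otimes (J:I)$, the outer implication becomes $K:[I\otimes (J:I)]$, and $(x\rightarrow z)\vee (y\rightarrow z)$ becomes $(K:I)+(K:J)$. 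Thus the identity is nothing but
\[
K:[I\otimes (J:I)]=(K:I)+(K:J).
\]

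Combining these two steps yields the equivalence: by Theorem~\ref{Theorem_1}, $Id(A)$ is a BL-algebra iff the above equality holds for every $I,J,K\in Id(A)$, and by Definition~\ref{Definition_4} this is equivalent to $A$ being a BL-ring.

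There is no real obstacle; the conceptual work was already done in Theorem~\ref{Theorem_1}, and the Corollary is essentially a translation. The only point requiring care is bookkeeping the direction of the residual on $Id(A)$ (writing $(J:I)$ rather than $(I:J)$ for $I\rightarrow J$), and matching the join in $Id(A)$ to ideal sum $+$ rather than union.
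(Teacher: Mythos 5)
Your proposal is correct and coincides with the paper's (implicit) argument: the corollary is stated immediately after Theorem \ref{Theorem_1} precisely as the translation of the identity $[x\odot (x\rightarrow y)]\rightarrow z=(x\rightarrow z)\vee (y\rightarrow z)$ into the residuated lattice $(Id(A),\cap ,+,\otimes ,\rightarrow ,\{0\},A)$ with $I\rightarrow J=(J:I)$. Your care about the direction of the residual and the identification of $\vee$ with ideal sum is exactly the bookkeeping the paper relies on.
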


\begin{theorem}
\label{Theorem_2}\textit{Let }$A$\textit{\ be a commutative ring which is a
principal ideal domain. Then} $A$\textit{\ is a BL-ring.}
\end{theorem}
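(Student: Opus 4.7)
The plan is to verify directly that the residuated lattice $(Id(A),\cap ,+,\otimes ,\rightarrow ,\mathbf{0},A)$ from Example \ref{Example_2} satisfies the $(prel)$ and $(div)$ conditions. Since $A$ is a PID, every ideal is principal, which lets us reduce the lattice-theoretic identities to elementary gcd/lcm computations in $A$.

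First I would fix $I=(a)$, $J=(b)\in Id(A)$ and let $d=\gcd(a,b)$, writing $a=da^{\prime }$, $b=db^{\prime }$ with $\gcd(a^{\prime },b^{\prime })=1$. The key preliminary computation is to identify the residual:
\begin{equation*}
(J:I)=\{x\in A:xa\in (b)\}=\{x\in A:b^{\prime }\mid xa^{\prime }\}=(b^{\prime })=(b/d),
\end{equation*}
where the middle equality uses $\gcd(a^{\prime },b^{\prime })=1$. Symmetrically, $(I:J)=(a/d)$. It is also standard that $I\cap J=(\operatorname{lcm}(a,b))=(ab/d)$, $I+J=(d)$, and $I\otimes J=(ab)$.

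Next I would verify $(div)$ by computing
\begin{equation*}
I\otimes (I\rightarrow J)=(a)\otimes (b/d)=(ab/d)=(\operatorname{lcm}(a,b))=I\cap J,
\end{equation*}
and $(prel)$ by
\begin{equation*}
(I\rightarrow J)+(J\rightarrow I)=(b/d)+(a/d)=(\gcd(a/d,b/d))=(1)=A,
\end{equation*}
since $a/d$ and $b/d$ are coprime. By Definition \ref{Definition_2}, this makes $Id(A)$ a BL-algebra, hence $A$ is a BL-ring. Alternatively, one could instead verify the identity of Corollary \ref{Corollary 3.6} on principal ideals $I=(a)$, $J=(b)$, $K=(c)$ by the same sort of gcd bookkeeping, but checking $(prel)+(div)$ directly is shorter.

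The only delicate point is the computation of $(J:I)$: it must be justified that divisibility by a product in a PID factors through coprime parts, i.e., that $b\mid xa$ and $\gcd(a^{\prime },b^{\prime })=1$ force $b^{\prime }\mid x$. This is standard in a PID (it follows from Bezout, or from unique factorization), so there is no real obstacle — the result is essentially a translation of classical arithmetic of ideals in a PID into the residuated-lattice language.
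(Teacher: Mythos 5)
Your proposal is correct and takes essentially the same approach as the paper: both verify $(prel)$ and $(div)$ directly for $Id(A)$ by reducing to gcd arithmetic on principal generators, using $(J:I)=(b/d)$, $I\cap J=(ab/d)$ and $(I\rightarrow J)+(J\rightarrow I)=(\gcd(a/d,b/d))=A$. The only (minor) difference is that the paper handles the zero ideal as a separate case, via $Ann(J)+A=A$ and $J\otimes Ann(J)=\mathbf{0}=\mathbf{0}\cap J$ in an integral domain, which you should also do explicitly since the quotients $a/d$, $b/d$ degenerate when a generator is $0$.
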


\begin{proof}
Since $A$ is a principal ideal domain, let $I=<a>$, $J=<b>,$ be the
principal non-zero ideals generated by $a,b\in A$ $\backslash \{0\}$.

If $d=$\textit{gcd}$\{a,b\}$, then $d=a\cdot \alpha +b\cdot \beta ,$ $a,b\in
A$, $a=a_{1}d$ and $b=b_{1}d$, with $1=$\textit{gcd}$\{a_{1},b_{1}\}$. Thus, 
$I+J=<d>,$ $\ I\cap J=<ab/d>,$ $I\otimes J=<ab>$ and $\left( I:J\right) =$ $%
<a_{1}>.$

The conditions $(prel)$ is satisfied: $(I:J)+(J:I)=<a_{1}>+<b_{1}>=<1>=A$
and $(div)$ is also satisfied: $J\otimes (I:J)=<b>\otimes
<a_{1}>=<ab/d>=I\cap J$.

If $I=\{0\}$, since $A$ is an integral domain, we have that $\left( \mathbf{0%
}:J\right) +(J:\mathbf{0})=Ann(J)+A=A$ and $J\otimes (\mathbf{0}:J)=J\otimes
Ann(J)=\mathbf{0=0}\cap J=$ $\mathbf{0}\otimes (J:\mathbf{0}),$ for every $%
J\in Id\left( A\right) \backslash \{0\}.$

Moreover, we remark that $\left( I:\left( I:J\right) \right) =\left(
J:\left( J:I\right) \right) =I+J$, for every non-zero ideals \textit{\ }$%
I,J\in Id(A).$ Also, since $A$ is an integral domain, we obtain $%
Ann(Ann(I))=A,$ for every $I\in Id\left( A\right) \backslash \{0\}.$ We
conclude that $Id(A)$ is a BL-algebra which is not an MV-algebra.
\end{proof}

\begin{corollary}
\label{Corollary_2_1}\textit{A ring factor of a principal ideal domain is a
BL-ring. }
\end{corollary}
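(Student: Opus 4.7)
The plan is to combine Theorem \ref{Theorem_2} with a closure property of the class of BL-rings stated earlier in the paper. By a ``ring factor'' of $A$ I understand a quotient ring $A/I$ for some ideal $I\in Id(A)$, which is exactly the homomorphic image of $A$ under the canonical projection $\pi\colon A\to A/I$. Since $A$ is a principal ideal domain, Theorem \ref{Theorem_2} gives that $A$ is a BL-ring, i.e.\ $(Id(A),\cap,+,\otimes,\rightarrow,\mathbf{0},A)$ is a BL-algebra. The paper has already recorded (citing \cite{[LN; 18]}) that the class of BL-rings is closed under homomorphic images, so $A/I$ is automatically a BL-ring. This gives a one-line argument once the cited closure property is invoked.

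If one wanted to avoid appealing to the closure result, the alternative would be to work directly with the ideal correspondence $J\mapsto \pi(J)=J/I$ between ideals of $A$ containing $I$ and ideals of $A/I$. I would first check that this bijection preserves the residuated-lattice operations $\cap$, $+$, $\otimes$ and the residuum $(\cdot:\cdot)$, so that $Id(A/I)$ is isomorphic as a residuated lattice to the principal filter $\{J\in Id(A)\mid I\subseteq J\}$ inside $Id(A)$. Then, since $(prel)$ and $(div)$ are equational conditions, they transfer from $Id(A)$ to this filter and hence to $Id(A/I)$; equivalently, one may apply Theorem \ref{Theorem_1} directly, verifying
\[
K:[I'\otimes(J':I')]=(K:I')+(K:J')
\]
in $Id(A/I)$ by lifting $I',J',K$ to ideals of $A$ containing $I$ and using the same identity in $Id(A)$ guaranteed by Corollary \ref{Corollary 3.6}.

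The only mildly delicate step is checking that the quotient operation on ideals commutes with passing to $A/I$, i.e.\ that $(K/I:J/I)=(K:J)/I$ when $I\subseteq K\cap J$; this is a routine consequence of the correspondence theorem for ideals. Apart from that, the proof is essentially immediate.

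Overall, given the machinery already developed, I would present the corollary with the short argument: ``By Theorem \ref{Theorem_2}, any PID $A$ is a BL-ring, and since BL-rings are closed under homomorphic images (\cite{[LN; 18]}), every ring factor $A/I$ is a BL-ring.'' The main conceptual obstacle, if any, is simply confirming that the authors' notion of ``ring factor'' coincides with ``homomorphic image,'' which for commutative unitary rings is standard via the first isomorphism theorem.
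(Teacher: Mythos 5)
Your main argument is exactly the paper's proof: a ring factor is a homomorphic image, the PID is a BL-ring by Theorem \ref{Theorem_2}, and BL-rings are closed under homomorphic images by \cite{[LN; 18]}. The proposal is correct and matches the paper's approach; the paper only adds the further remark (from \cite{[FP; 22]}) that the lattice of ideals of such a quotient is in fact an MV-algebra.
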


\begin{proof}
Using Theorem \ref{Theorem_2} since BL-rings are closed under homomorphic
images, see \cite{[LN; 18]}. Moreover, we remark that, in fact, if $\mathcal{%
A}$ \textit{is a ring factor of a principal ideal domain, then }$(Id(%
\mathcal{A}),\cap ,+,\otimes \rightarrow ,0=\{0\},1=A)$ \textit{\ }is an
MV-algebra, see \cite{[FP; 22]}.
\end{proof}

\begin{corollary}
\label{Corollary _3_4} \textit{\ A finite commutative unitary ring of the
form }$A=\mathbb{Z}_{k_{1}}\times \mathbb{Z}_{k_{2}}\times ...\times \mathbb{%
Z}_{k_{r}}$(direct product of rings, equipped with componentwise
operations), \textit{where} $k_{i}=p_{i}^{\alpha _{i}}$, $p_{i}$ \textit{is} 
\textit{a prime number, with }$p_{i}\neq p_{j},$\textit{\ is a BL-ring.}
\end{corollary}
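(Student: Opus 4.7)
The plan is to deduce this as a short corollary of Theorem~\ref{Theorem_2} together with the closure properties of the class of BL-rings recalled immediately after Definition~\ref{Definition_4}. There are two natural routes, and I would present the first because it is more general and uses less structure.

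The first route is componentwise. Since $\mathbb{Z}$ is a principal ideal domain, Theorem~\ref{Theorem_2} tells us that $\mathbb{Z}$ itself is a BL-ring. For each $i$, the factor $\mathbb{Z}_{p_i^{\alpha_i}}=\mathbb{Z}/p_i^{\alpha_i}\mathbb{Z}$ is a homomorphic image of $\mathbb{Z}$, so Corollary~\ref{Corollary_2_1} gives that every $\mathbb{Z}_{p_i^{\alpha_i}}$ is a BL-ring (and in fact its ideal lattice is even an MV-algebra). Invoking the fact cited from \cite{[LN; 18]} that BL-rings are closed under finite direct products, one concludes that $A=\mathbb{Z}_{k_1}\times\cdots\times\mathbb{Z}_{k_r}$ is a BL-ring. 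Notice that this argument does not actually use the coprimality hypothesis $p_i\neq p_j$; it works for any finite direct product of rings of the form $\mathbb{Z}_{p^\alpha}$.

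The second route, by contrast, exploits the distinctness of the primes. The Chinese Remainder Theorem yields a ring isomorphism $A\cong \mathbb{Z}/n\mathbb{Z}$ with $n=\prod_i p_i^{\alpha_i}$, so $A$ is literally a ring factor of the PID $\mathbb{Z}$, and Corollary~\ref{Corollary_2_1} applies directly, without any appeal to the direct-product closure.

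There is no substantive obstacle in either proof; the only point requiring care is making sure that Corollary~\ref{Corollary_2_1} and the direct-product closure from \cite{[LN; 18]} are both quoted cleanly. I would write the final proof in two or three lines along the first route, and possibly remark that the hypothesis $p_i\neq p_j$ is in fact redundant for the conclusion.
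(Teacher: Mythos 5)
Your first route is exactly the paper's proof: it applies Corollary~\ref{Corollary_2_1} to each factor $\mathbb{Z}_{p_i^{\alpha_i}}$ (as a quotient of the PID $\mathbb{Z}$) and then invokes the closure of BL-rings under finite direct products from \cite{[LN; 18]}. The argument is correct, and your observation that the hypothesis $p_i\neq p_j$ is not needed for this conclusion is also accurate.
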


\begin{proof}
We apply Corollary \ref{Corollary_2_1} using the fact that BL-rings are
closed under finite direct products, see \cite{[LN; 18]}.

Moreover, we remark that if $\ A$ is a finite commutative unitary ring of
the above form, then $Id\left( A\right) =$ $Id(\mathbb{Z}_{k_{1}})\times Id(%
\mathbb{Z}_{k_{2}})\times ...\times Id(\mathbb{Z}_{k_{r}})$\ is an
MV-algebra $(Id\left( A\right) ,\oplus ,^{\ast },0=\{0\})$ in which%
\begin{equation*}
I\oplus J=Ann(Ann(I)\otimes Ann(J))\text{ and }I^{\ast }=Ann(I)
\end{equation*}%
for every $I,J\in Id\left( A\right) $, since, $Ann(Ann(I))=I,$ see \cite%
{[FP; 22]}.
\end{proof}

\begin{example}
\label{Remark_2} 1) Following Theorem \ref{Theorem_2}, the ring of integers $%
(\mathbb{Z},\mathbb{+},\cdot )$ is a BL-ring in which $(Id(\mathbb{Z}),\cap
,+,\otimes \rightarrow ,0=\{0\},1=A)$ is not an MV-algebra. Indeed, since $%
\mathbb{Z}$ is principal ideal domain, we have $Ann\left( Ann\left( I\right)
\right) =\mathbb{Z}$, for every $I\in Id\left( \mathbb{Z}\right) \backslash
\{0\}.$

2) Let $K$ be a field and $K\left[ X\right] $ be the polynomial ring. For $%
f\in K\left[ X\right] $, the quotient ring $A=K\left[ X\right] /\left(
f\right) $ is a BL-ring. Indeed, the lattice of ideals of this\ ring is an
MV-algebra, see \cite{[FP; 22]}.
\end{example}

\section{Examples of BL-algebras using commutative rings}

In this section we present ways to generate finite\ BL-algebras using finite
commutative rings.

First, we give examples of finite BL-rings whose lattice of ideals is not an
MV-algebra. Using these rings we construct BL-algebras with $2^{r}+1$
elements, $r\geq 2$ (see Proposition \ref{Proposition10}) and BL-chains with 
$k\geq 4$ elements (see Proposition \ref{Proposition11}).

We recall that, in \cite{[FP; 22]}, we proved the following result:

\begin{proposition}
\label{Proposition1} (\cite{[FP; 22]}\textit{\ If} $A$ \textit{is a finite
commutative unitary ring of the form }$A=\mathbb{Z}_{k_{1}}\times \mathbb{Z}%
_{k_{2}}\times ...\times \mathbb{Z}_{k_{r}}$(direct product of rings,
equipped with componentwise operations), \textit{where} $k_{i}=p_{i}^{\alpha
_{i}}$, $p_{i}$ \textit{is} \textit{a prime number, with }$p_{i}\neq p_{j},$%
\textit{\ and} $Id\left( A\right) $ \textit{denotes the set of all ideals of
the ring} $A$, \textit{then,} $\left( Id\left( A\right) ,\vee ,\wedge ,\odot
,\rightarrow ,0,1\right) $ \textit{is an MV-algebra, where the order
relation is} $\subseteq $, $I\odot J=I\otimes J$, $I^{\ast }=Ann(I)$, $%
I\rightarrow J=\left( J:I\right) $, $I\vee J=I+J$, $I\wedge J=I\cap J$, $%
0=\{0\}$ \textit{and} $1=A$. \textit{The set} $Id\left( A\right) $ \textit{%
has} $\mathcal{N}_{A}=\overset{r}{\underset{i=1}{\prod }}\left( \alpha
_{i}+1\right) $ \textit{elements}.
\end{proposition}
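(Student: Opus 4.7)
The plan is to reduce to the local case via the Chinese Remainder Theorem and then identify each local factor with a standard MV-chain. Since $k_{1},\ldots ,k_{r}$ are pairwise coprime (they are powers of distinct primes), CRT gives a ring isomorphism $A\cong \prod_{i=1}^{r}\mathbb{Z}_{k_{i}}$, and this promotes to a lattice isomorphism $\varphi :Id(A)\to \prod_{i=1}^{r}Id(\mathbb{Z}_{k_{i}})$ sending $I$ to $(\pi _{i}(I))_{i}$, where $\pi _{i}$ is the $i$-th projection. A routine check shows that $\varphi $ transports $+$, $\cap $, $\otimes $ and the ideal quotient $(\,:\,)$ to the corresponding componentwise operations, so $\varphi $ intertwines the residuated-lattice structures given in Example \ref{Example_2}.

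Next I would describe $Id(\mathbb{Z}_{p^{\alpha }})$ explicitly. Since $\mathbb{Z}_{p^{\alpha }}$ is a principal ideal ring with unique prime $\langle p\rangle $, its ideals form the chain
\begin{equation*}
\{0\}=\langle p^{\alpha }\rangle \subset \langle p^{\alpha -1}\rangle \subset \cdots \subset \langle p\rangle \subset \langle 1\rangle =\mathbb{Z}_{p^{\alpha }},
\end{equation*}
which has exactly $\alpha +1$ elements. For $I=\langle p^{i}\rangle $ and $J=\langle p^{j}\rangle $ a direct computation gives $I+J=\langle p^{\min (i,j)}\rangle $, $I\cap J=\langle p^{\max (i,j)}\rangle $, $I\otimes J=\langle p^{\min (i+j,\alpha )}\rangle $ and $(J:I)=\langle p^{\max (j-i,0)}\rangle $. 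Under the order-reversing bijection $\langle p^{i}\rangle \mapsto (\alpha -i)/\alpha $ these operations are exactly the \L ukasiewicz operations of the MV-chain $L_{\alpha +1}$; in particular $Ann(Ann(I))=I$, so each local factor is an MV-algebra (and not merely a BL-algebra).

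To finish, I would invoke the fact that MV-algebras form a variety and are therefore closed under arbitrary direct products, taken componentwise. Combined with the isomorphism $\varphi $ from the first step, this yields
\begin{equation*}
Id(A)\cong \prod_{i=1}^{r}L_{\alpha _{i}+1},
\end{equation*}
which is an MV-algebra in which $I\oplus J$, $I^{\ast }$, $I\odot J$, $I\rightarrow J$, $I\vee J$, $I\wedge J$ reduce componentwise to the formulas stated. The cardinality count $\mathcal{N}_{A}=\prod_{i=1}^{r}(\alpha _{i}+1)$ follows immediately from the same product decomposition.

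The main obstacle I expect is the compatibility of $\varphi $ with the ideal quotient: the equalities $\pi _{i}((J:I))=(\pi _{i}(J):\pi _{i}(I))$ for every $i$ use coprimality of the $k_{i}$ in an essential way, whereas compatibility with $+$, $\cap $ and $\otimes $ is essentially formal. Once this verification is done, the rest of the argument is a finite combinatorial identification of each local factor with a standard \L ukasiewicz chain, together with the variety closure of MV-algebras under products.
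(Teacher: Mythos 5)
Your argument is correct and complete, but note that the paper itself offers no proof of this statement: Proposition \ref{Proposition1} is quoted verbatim from \cite{[FP; 22]} as a known result, so there is no internal proof to compare yours against. Your route --- decompose $Id(A)$ as $\prod_i Id(\mathbb{Z}_{p_i^{\alpha_i}})$, identify each local factor with the \L ukasiewicz chain $L_{\alpha_i+1}$ via $\langle p^i\rangle\mapsto(\alpha-i)/\alpha$, and conclude by closure of the variety of MV-algebras under finite products --- is the standard one and all the local computations ($I+J$, $I\cap J$, $I\otimes J$, $(J:I)$, $Ann(Ann(I))=I$) check out, as does the count $\prod_i(\alpha_i+1)$.

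Two small corrections. First, the ``main obstacle'' you anticipate is not actually there: since $A$ is \emph{defined} as the direct product $\mathbb{Z}_{k_1}\times\cdots\times\mathbb{Z}_{k_r}$ of unital rings, every ideal of $A$ is automatically of the form $I_1\times\cdots\times I_r$, and the identity $(J_1\times\cdots\times J_r : I_1\times\cdots\times I_r)=(J_1:I_1)\times\cdots\times(J_r:I_r)$ is a purely formal componentwise computation that does not use coprimality of the $k_i$ at all; coprimality (equivalently $p_i\neq p_j$) is only needed if one wants to realize $A$ as $\mathbb{Z}_{k_1\cdots k_r}$ via the Chinese Remainder Theorem, which your proof never requires. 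Second, the bijection $\langle p^i\rangle\mapsto(\alpha-i)/\alpha$ is order-\emph{preserving} from $(Id(\mathbb{Z}_{p^\alpha}),\subseteq)$ to $([0,1],\leq)$ (the zero ideal goes to $0$ and the whole ring to $1$); it reverses only the exponent $i$, not the lattice order. Neither point affects the validity of the proof.
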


In the following, we give examples of finite BL-rings whose lattice of
ideals is not an MV-algebra.\medskip 

\begin{definition}
\label{Definition2} (\cite{[BP; 02]}) Let $R$ be a commutative unitary ring.
The ideal $M$ of the ring $R$ is \textit{maximal} if it is maximal, with
respect of the set inclusion, amongst all proper ideals of the ring $R$.
That means, there are no other ideals different from $R$ contained $M$. The
ideal $J$ of the ring $R$ is a \textit{minimal ideal} if it is a nonzero
ideal which contains no other nonzero ideals. A commutative \textit{local
ring} $R$ is a ring with a unique maximal ideal.
\end{definition}

\begin{example}
\label{Example3} (i) A field $F$ is a local ring, with $\{0\}$ the maximal
ideal in this ring.

(ii) In $\left( \mathbb{Z}_{8},+,\cdot \right) $, the ideal $J=\{\widehat{0},%
\widehat{4}\}$ is a minimal ideal and the ideal $M=\{\widehat{0},\widehat{2},%
\widehat{4},\widehat{6}\}$ is the maximal ideal.
\end{example}

\begin{remark}
\label{Remark4} Let $R$ be a local ring with $M$ its maximal ideal. Then,
the quotient ring $R[X]/(X^{n})$, with $n$ a positive integer, is local.
Indeed, the unique maximal ideal of the ring $R[X]/(X^{n})$ is $%
\overrightarrow{M}=\{\overrightarrow{f}\in R[X]/(X^{n})/~f\in
R[X],f=a_{0}+a_{1}X+...+a_{n-1}X^{n-1}$, with $a_{0}\in M\}$. For other
details, the reader is referred to \cite{[La; 01]}.
\end{remark}

In the following, we consider the ring $(\mathbb{Z}_{n},+,\cdot )$ with $%
n=p_{1}p_{2}...p_{r},p_{1},p_{2},...,p_{r}$ being distinct prime numbers, $%
r\geq 2,$ and the factor ring $R=\mathbb{Z}_{n}[X]/\left( X^{2}\right) .$

\begin{remark}
\label{Remark5} (i)\textbf{\ }With the above notations, in the ring $(%
\mathbb{Z}_{n},+,\cdot )$, the ideals generated by $\widehat{p}%
_{i},M_{p_{i}}=\left( \widehat{p}_{i}\right) ,$ are maximals. The ideals of $%
\mathbb{Z}_{n}$ are of the form $I_{d}=\left( \widehat{d}\right) $, where $d$
is a divisor of $n$.

(ii) Each element from $\mathbb{Z}_{n}-\{M_{p_{1}}\cup M_{p_{2}}\cup ...\cup
M_{p_{r}}\}$ is an invertible element. Indeed, if $\widehat{x}\in \mathbb{Z}%
_{n}-\{M_{p_{1}}\cup M_{p_{2}}\cup ...\cup M_{p_{r}}\}$, we have \textit{gcd}%
\thinspace $\{x,n\}=1$, therefore $x$ is an invertible element.
\end{remark}

\begin{proposition}
\label{Proposition6} \textit{i)} \textit{With the above notations, the
factor ring} $R=\mathbb{Z}_{n}[X]/\left( X^{2}\right) $ \textit{has} $2^{r}+1
$ \textit{ideals, including} $\{0\}$ \textit{and} $R$.

\textit{ii) For} $\widehat{\gamma }\in \mathbb{Z}_{n}-\{M_{p_{1}}\cup
M_{p_{2}}\cup ...\cup M_{p_{r}}\}$\textit{, the element} $X+\widehat{\gamma }
$ \textit{is an invertible element in} $R$ .
\end{proposition}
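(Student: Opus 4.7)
For part (i), my plan is to enumerate the ideals of $R = \mathbb{Z}_n[X]/(X^2)$ explicitly. Since $n = p_1 p_2 \cdots p_r$ is squarefree, the ring $\mathbb{Z}_n$ has exactly $2^r$ ideals, one $(\widehat{d})$ for each divisor $d$ of $n$, corresponding bijectively to subsets of $\{p_1,\ldots,p_r\}$. For each such $d$ I would form the ideal $J_d = (\widehat{d},X)R$ of $R$, noting that this family includes $J_1 = R$ and $J_n = (X)$ (since $\widehat{n} = \widehat{0}$). Together with $\{0\}$ these are $2^r + 1$ pairwise distinct candidate ideals, and one must verify they exhaust $Id(R)$.

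The crux is showing no other ideals exist. My approach is: take any nonzero ideal $I \subseteq R$ and argue first that $X \in I$, so $(X) \subseteq I$; then the image $I/(X)$ inside $R/(X) \cong \mathbb{Z}_n$ is an ideal of $\mathbb{Z}_n$, necessarily of the form $(\widehat{d})$ for some divisor $d$ of $n$, which forces $I = J_d$. For the containment $X \in I$, I would pick a nonzero $\widehat{a} + \widehat{b}X \in I$ and exploit that multiplication by $X$ yields $\widehat{a}X \in I$ (since $X^2 = 0$), combined with actions by units of $\mathbb{Z}_n$ (available in $R$) to extract $X$ itself from $\widehat{a}X$ or from $\widehat{b}X$.

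For part (ii), by Remark \ref{Remark5}(ii), the hypothesis $\widehat{\gamma} \in \mathbb{Z}_n \setminus \bigcup_i M_{p_i}$ gives that $\widehat{\gamma}$ is a unit of $\mathbb{Z}_n$. Since $X$ is nilpotent in $R$ with $X^2 = 0$, I would exhibit the inverse of $X + \widehat{\gamma}$ directly as $\widehat{\gamma}^{-1} - \widehat{\gamma}^{-2}X$, verifying by direct expansion that
\[
(X + \widehat{\gamma})(\widehat{\gamma}^{-1} - \widehat{\gamma}^{-2}X) = \widehat{\gamma}^{-1}X - \widehat{\gamma}^{-2}X^2 + 1 - \widehat{\gamma}^{-1}X = 1,
\]
using $X^2 = 0$.

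The main obstacle will be the reduction step in (i) — showing that every nonzero ideal contains $X$. The argument must carefully handle the case where the available generator has zero constant term (i.e., lies in $XR$), since then one cannot directly invoke invertibility in $\mathbb{Z}_n$, and one must instead use the divisor structure of the $\widehat{b}X$ coefficient to recover $X$.
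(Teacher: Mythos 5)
Your part (ii) is fine and is essentially the paper's own argument: $\widehat{\gamma}$ is a unit of $\mathbb{Z}_{n}$ by Remark \ref{Remark5}(ii), $X$ is square-zero, and $\widehat{\gamma}^{-1}-\widehat{\gamma}^{-2}X$ is an explicit inverse of $X+\widehat{\gamma}$.

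The genuine problem is in part (i), and it sits exactly at the step you single out as the main obstacle: the claim that every nonzero ideal of $R=\mathbb{Z}_{n}[X]/(X^{2})$ contains $X$ is false, so the reduction to ideals of $R/(X)\cong \mathbb{Z}_{n}$ cannot be carried out. Take $n=6$ and $I=(\widehat{2}X)=\{\widehat{0},\widehat{2}X,\widehat{4}X\}$: every element of $I$ has the form $\widehat{2a}X$, so $X\notin I$, yet $I\neq (0)$ and $I$ is not any $(\widehat{d},X)$. Likewise $\widehat{3}R=\{\widehat{0},\widehat{3},\widehat{3}X,\widehat{3}+\widehat{3}X\}$ is an ideal with elements of nonzero constant term that still fails to contain $X$. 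More structurally, the Chinese Remainder Theorem gives $R\cong \prod_{i=1}^{r}\mathbb{Z}_{p_{i}}[X]/(X^{2})$, each factor is a local ring with exactly three ideals $(0)\subset (X)\subset \mathbb{Z}_{p_{i}}[X]/(X^{2})$, and ideals of a finite direct product are products of ideals, so $R$ has $3^{r}$ ideals rather than $2^{r}+1$ (nine, not five, when $n=6$). Your family $\{(0)\}\cup\{(\widehat{d},X):d\mid n\}$ consists precisely of the ideals that are zero or contain $X$, which is a proper subfamily; the case you defer to the end (generators with zero constant term or non-unit coefficients) is not a technical wrinkle to be handled but the place where the count breaks down. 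For comparison, the paper's proof simply lists the ideals $(0)$, $(X)$ and the $J_{d}$ and counts them without arguing exhaustiveness; your more careful two-step strategy (first $X\in I$, then pass to $R/(X)$) is the right instinct, and it is exactly what exposes the missing ideals.
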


\begin{proof}
(i) Indeed, the ideals are: $J_{p_{i}}=\left( \widehat{\alpha }X+\widehat{%
\alpha }_{i}\right) ,\widehat{\alpha }_{i}\in M_{p\,_{i}},i\in
\{1,2,...,r\}, $ which are maximals, $J_{d}=\left( \widehat{\beta }X+%
\widehat{\beta }_{d}\right) ,\widehat{\beta }_{d}\in I_{d},I_{d}$ is not
maximal, $\widehat{\alpha },\widehat{\beta }\in R,d\neq n$, where $d$ is a
proper divisor of $n$, the ideals $\left( X\right) $, for $d=n$ and $\left(
0\right) $. Therefore, we have $\complement _{n}^{0}$ ideals, for ideal $%
\left( X\right) $, $\complement _{n}^{1}$ ideals for ideals $J_{p_{i}}$, $%
\complement _{n}^{2}$ ideals for ideals $J_{p_{i}p_{j}}$, $p_{i}\neq p_{j}$%
,...,$\complement _{n}^{n}$ ideals for ideal $R$, for $d=1$, resulting a
total of $2^{r}+1,$ if we add ideal $\left( 0\right) $.

(ii) Since $\widehat{\gamma }\in \mathbb{Z}_{n}-\{M_{p_{1}}\cup
M_{p_{2}}\cup ...\cup M_{p_{r}}\}$, we have that $\widehat{\gamma }$ is
invertible, with $\widehat{\delta }$ its inverse. Therefore, $\left( X+%
\widehat{\gamma }\right) [-\widehat{\delta }^{-2}\left( X-\widehat{\gamma }%
\right) ]=1$. It results that $X+\widehat{\gamma }$ is invertible, therefore 
$\left( X+\widehat{\gamma }\right) =R$.
\end{proof}

Since for any commutative unitary ring, its lattice of ideals is a
residuated lattice (see \cite{[TT; 22]}), in particular, for the unitary and
commutative ring \thinspace $A=\mathbb{Z}_{n}[X]/\left( X^{2}\right) $, we
have that $(Id(\mathbb{Z}_{n}/\left( X^{2}\right) ),\cap ,+,\otimes
\rightarrow ,0=\{0\},1=A)$ is a residuated lattice with $2^{r}+1$ elements.

\begin{remark}
\label{Remark7} As we remarked above, the ideals in the ring $R=\mathbb{Z}%
_{n}[X]/\left( X^{2}\right) $ are:

(i) $\left( 0\right) ;$

(ii) of the form $J_{d}=\left( \widehat{\alpha }X+\widehat{\beta }%
_{d}\right) ,\widehat{\alpha }\in R,\widehat{\beta }_{d}\in I_{d},$ where $d$
is a proper divisor of $n=p_{1}p_{2}...p_{r},p_{1},p_{2},...,p_{r}$ being
distinct prime numbers, $r\geq 2,$ by using the notations from Remark \ref%
{Remark5}. If $I_{d}=\left( \widehat{p_{i}}\right) ,$ then $J_{d}$ is
denoted $J_{p_{i}}$ and it is a maximal ideal in $R=\mathbb{Z}_{n}[X]/\left(
X^{2}\right) $ ;

(iii) The ring $R,$ if $d=1$;

(iv) $\left( X\right) ,$ if $d=n$.
\end{remark}

\begin{remark}
\label{Remark8} We remark that for all two nonzero ideals of the above ring $%
R$, $I$ and $J$, we have $\left( X\right) \subseteq I\cap J$ and the ideal $%
\left( X\right) $ is the only minimal ideal of $\mathbb{Z}_{n}[X]/\left(
X^{2}\right) $.
\end{remark}

\begin{remark}
\label{Remark9} Let $D_{d}=\{$ $p\in \{p_{1},p_{2},...,p_{r}\},$ such that $%
d=\prod p\},d\neq 1$.

(1) We have $J_{d_{1}}\cap J_{d_{2}}=J_{d_{1}}\otimes J_{d_{2}}=J_{d_{3}}$,
where $D_{d_{3}}=\{p\in D_{d_{1}}\cup D_{d_{2}},d_{3}=\prod p\}$, for $%
d_{1},d_{2}$ proper divisors.

If $d_{1}=1,$ we have $R\otimes J_{d_{2}}=J_{d_{2}}=R\cap J_{d_{2}}$.

If $d_{1}=n$, $d_{2}\neq n$, we have $\left( X\right) \otimes
J_{d_{2}}=\left( X\right) \cap J_{d_{2}}=\left( X\right) $. If $d_{2}=n,$ we
have $\left( X\right) \otimes \left( X\right) =\left( 0\right) $.

(2) We have $(J_{d_{1}}:J_{d_{2}})=J_{d_{3}}$, with $%
D_{d_{3}}=D_{d_{1}}-D_{d_{2}}$. Indeed, $(J_{d_{1}}:J_{d_{2}})=\{y\in
R,y\cdot J_{d_{2}}\subseteq J_{d_{1}}\}=J_{d_{3}}$, for $d_{1},d_{2}$ proper
divisors$.$

If $J_{d_{1}}=\left( 0\right) ,$ we have $(0:J_{d_{2}})=\left( 0\right) $.
Indeed, if $(0:J_{d_{2}})=J\neq \left( 0\right) $, it results that $J\otimes
J_{d_{2}}=\left( 0\right) $. But, from the above, $J\otimes J_{d_{2}}=$ $%
J\cap J_{d_{2}}$ $\neq \left( 0\right) $, which is false

If $J_{d_{2}}=\left( 0\right) $, we have $(J_{d_{1}}:0)=R$.

If $d_{1}=1$, we have $\left( R:J_{d_{2}}\right) =R$ and $\left(
J_{d_{2}}:R\right) =J_{d_{2}}.$

If $d_{1}=n$, $d_{2}\neq n$, we have $J_{d_{1}}=\left( X\right) $, therefore 
$(J_{d_{1}}:J_{d_{2}})=J_{d_{1}}=\left( X\right) $. If $d_{1}\neq n$, $%
d_{2}=n$, we have $J_{d_{2}}=\left( X\right) $, therefore $%
(J_{d_{1}}:J_{d_{2}})=R$. If $d_{1}=d_{2}=n$, we have $J_{d_{1}}=J_{d_{2}}=%
\left( X\right) $ and $(J_{d_{1}}:J_{d_{2}})=R$.
\end{remark}

\begin{proposition}
\label{Proposition10} (\textit{i)} \textit{For} $n\geq 2$\textit{, with the
above notations, the residuated lattice} $(Id(\mathbb{Z}_{n}[X]/\left(
X^{2}\right) ),\cap ,+,\otimes \rightarrow ,0=\{0\},1=R),R=\mathbb{Z}%
_{n}[X]/\left( X^{2}\right) ,$ \ \textit{is a BL-algebra with} $2^{r}+1$ 
\textit{elements.}

(\textit{ii)} \textit{By using notations from Remark \ref{Remark7}, we have
that} $(Id_{p_{i}}(\mathbb{Z}_{n}[X]/\left( X^{2}\right) ),\cap ,+,\otimes
\rightarrow ,0=\{0\},1=R),$\textit{where} $Id_{p_{i}}(\mathbb{Z}%
_{n}[X]/\left( X^{2}\right) )=\{\left( 0\right) ,J_{p_{i}},R\},~$\textit{is
\ a BL-sublattice of the lattice} $Id(\mathbb{Z}_{n}[X]/\left( X^{2}\right) )
$, \textit{having }$3$ \textit{elements.}
\end{proposition}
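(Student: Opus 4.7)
The plan is to prove (i) by directly verifying the $(prel)$ and $(div)$ identities of Definition \ref{Definition_2} on the residuated lattice $(Id(R),\cap,+,\otimes,\rightarrow,0,1)$ for $R=\mathbb{Z}_n[X]/(X^2)$; that this structure is a residuated lattice is Example \ref{Example_2} applied to the commutative unitary ring $R$, and the cardinality $2^r+1$ is Proposition \ref{Proposition6}(i), so the only new content is the two BL-algebra identities. I would parametrize the elements of $Id(R)$ by the divisors $d$ of $n$ together with the subsets $D_d$ from Remark \ref{Remark9}, keeping the boundary ideals $(0)$, $(X)=J_n$, and $R=J_1$ aside for separate case checks.

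For $(div)$ applied to two proper-divisor ideals $J_{d_1}, J_{d_2}$, Remark \ref{Remark9}(2) gives $(J_{d_1}:J_{d_2})=J_{d_3}$ with $D_{d_3}=D_{d_1}\setminus D_{d_2}$, and then Remark \ref{Remark9}(1) yields $J_{d_2}\otimes J_{d_3}=J_{d_5}$ with $D_{d_5}=D_{d_1}\cup D_{d_2}$, matching $J_{d_1}\cap J_{d_2}$. For $(prel)$, the symmetric computation gives $(J_{d_1}:J_{d_2})+(J_{d_2}:J_{d_1})=J_a+J_b$ with $D_a=D_{d_1}\setminus D_{d_2}$ and $D_b=D_{d_2}\setminus D_{d_1}$ disjoint; since a sum of such ideals corresponds to the gcd of the underlying divisors, this reduces to $J_1=R$. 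The boundary cases (one or both inputs equal to $(0)$, $(X)$, or $R$) are read off directly from the identities listed in Remark \ref{Remark9} and verified one by one.

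For (ii), the three-element set $\{(0), J_{p_i}, R\}$ is linearly ordered by $(0)\subseteq J_{p_i}\subseteq R$, so closure under $\cap$ and $+$ is immediate. Closure under $\otimes$ reduces to $J_{p_i}\otimes J_{p_i}=J_{p_i}$, which is Remark \ref{Remark9}(1) with $d_1=d_2=p_i$, and closure under $\rightarrow$ is a short check such as $(J_{p_i}:J_{p_i})=R$ from $D_{p_i}\setminus D_{p_i}=\emptyset$. Any three-element chain that is a residuated lattice is automatically a BL-algebra, so (ii) follows. The main obstacle is not conceptual but logistical: the $(div)$ and $(prel)$ verifications split into many subcases according to whether each input is a proper-divisor $J_d$ or one of the boundary ideals, and one must take care to apply the correct clause of Remark \ref{Remark9} in each regime.
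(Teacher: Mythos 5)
Your proposal is correct and follows essentially the same route as the paper: part (i) is proved by verifying $(prel)$ and $(div)$ case by case using the computations of quotients, products and sums of the ideals $J_{d}$ recorded in Remark \ref{Remark9} (the same coprimality/Bezout argument for $(prel)$ in the proper-divisor case, the same set-difference and union bookkeeping for $(div)$, and the same separate treatment of the boundary ideals $(0)$, $(X)$ and $R$), while part (ii) amounts to checking closure of $\{(0),J_{p_{i}},R\}$ under the four operations. The only cosmetic difference is that for (ii) you invoke the general fact that a three-element residuated chain is automatically a BL-algebra, whereas the paper simply exhibits the operation tables.
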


\begin{proof}
(i) First, we will prove the $\left( prel\right) $ condition: 
\begin{equation*}
\left( I\rightarrow J\right) \vee \left( J\rightarrow I\right) =(J:I)\vee
(I:J)=\mathbb{Z}_{n}[X]/\left( X^{2}\right) ,
\end{equation*}%
for every $I,J\in Id(\mathbb{Z}_{n}[X]/\left( X^{2}\right) )$.

\textit{Case 1.} If $d_{1}$ and $d_{2}$ are proper divisors of $n$, we have $%
\left( J_{d_{1}}\rightarrow J_{d_{2}}\right) \vee \left(
J_{d_{2}}\rightarrow J_{d_{1}}\right) =(J_{d_{2}}:J_{d_{1}})\vee
(J_{d_{1}}:J_{d_{2}})=J_{d_{4}}\vee J_{d_{5}},$ where $%
D_{d_{4}}=D_{d_{1}}-D_{d_{2}}$ and $D_{d_{5}}=D_{d_{2}}-D_{d_{1}}$. We
remark that $D_{d_{4}}\cap D_{d_{5}}=\emptyset ,$ then \textit{gcd}%
\thinspace $\{d_{4},d_{5}\}=1$. From here, it results that there are the
integers $a$ and $b$ such that $ad_{4}+bd_{5}=1$. We obtain that $%
J_{d_{4}}\vee J_{d_{5}}=<J_{d_{4}}\cup J_{d_{5}}>=R$, from Proposition \ref%
{Proposition6}, ii).

\textit{Case 2.} If $d_{1}$ is a proper divisor of $n$ and $d_{2}=n$, we
have $J_{d_{2}}=\left( X\right) $. Therefore, $\left( J_{d_{1}}\rightarrow
J_{d_{2}}\right) \vee \left( J_{d_{2}}\rightarrow J_{d_{1}}\right)
=(J_{d_{2}}:J_{d_{1}})\vee (J_{d_{1}}:J_{d_{2}})=J_{d_{2}}\vee R=R$, by
using Remark \ref{Remark9}.

\textit{Case 3.} If $d_{1}$ is a proper divisor of $n$ and $J_{d_{2}}=\left(
0\right) $, we have $\left( J_{d_{1}}\rightarrow J_{d_{2}}\right) \vee
\left( J_{d_{2}}\rightarrow J_{d_{1}}\right) =(0:J_{d_{1}})\vee
(J_{d_{1}}:0)=0\vee R=R$, by using Remark \ref{Remark9}.

\textit{Case 4. }If $d_{1}$ is a proper divisor of $n$ and $J_{d_{2}}=R$, it
is clear. From here, we have that the condition $\left( prel\right) $ is
satisfied.

Now, we prove condition $\left( div\right) :$ 
\begin{equation*}
I\otimes \left( I\rightarrow J\right) =I\otimes \left( J:I\right) =I\cap J,
\end{equation*}%
for every $I,J\in Id(\mathbb{Z}_{n}[X]/\left( X^{2}\right) )$.

\textit{Case 1.} If $d_{1}$ and $d_{2}$ are proper divisors of $n$, we have $%
J_{d_{1}}\otimes \left( J_{d_{2}}:J_{d_{1}}\right) =J_{d_{1}}\otimes
J_{d_{3}}=J_{d_{4}}=J_{d_{1}}\cap J_{d_{2}},$ since $%
D_{d_{3}}=D_{d_{2}}-D_{d_{1}}$ and $D_{d_{4}}=\{p\in D_{d_{1}}\cup
D_{d_{3}},d_{4}=\prod p\}=\{p\in D_{d_{1}}\cup D_{d_{2}},d_{4}=\prod p\}$.

\textit{Case 2.} If $d_{1}$ is a proper divisor of $n$ and $d_{2}=n$, we
have $J_{d_{2}}=\left( X\right) $. \ We obtain $J_{d_{1}}\otimes \left(
J_{d_{2}}:J_{d_{1}}\right) =J_{d_{1}}\otimes \left( \left( X\right)
:J_{d_{1}}\right) =J_{d_{1}}\otimes \left( X\right) =J_{d_{1}}\cap \left(
X\right) $ since $\left( X\right) \subset J_{d_{1}}$.

\textit{Case 3.} If $d_{1}=n$ and \thinspace $d_{2}$ is a proper divisor of $%
n$, we have $J_{d_{1}}=\left( X\right) $. \ We obtain $J_{d_{1}}\otimes
\left( J_{d_{2}}:J_{d_{1}}\right) =\left( X\right) \otimes \left(
J_{d_{2}}:\left( X\right) \right) =\left( X\right) \otimes R=\left( X\right)
=J_{d_{2}}\cap \left( X\right) $ since $\left( X\right) \subset J_{d_{2}}$.

\textit{Case 4.} If $d_{1}$ is a proper divisor of $n$ and $J_{d_{2}}=\left(
0\right) $, we have $J_{d_{1}}\otimes \left( J_{d_{2}}:J_{d_{1}}\right)
=J_{d_{1}}\otimes \left( 0:J_{d_{1}}\right) =J_{d_{1}}\otimes \left(
0\right) =\left( 0\right) =J_{d_{1}}\cap \left( 0\right) ,$ from Remark \ref%
{Remark9}.

\textit{Case 5.} If $J_{d_{1}}=\left( 0\right) $ and $d_{2}$ is a proper
divisor of $n$, we have $J_{d_{1}}\otimes \left( J_{d_{2}}:J_{d_{1}}\right)
=0\otimes \left( J_{d_{2}}:0\right) =0.$

\textit{Case 6. }If $d_{1}$ is a proper divisor of $n$ and $J_{d_{2}}=R$, we
have $J_{d_{1}}\otimes \left( J_{d_{2}}:J_{d_{1}}\right) =J_{d_{1}}\otimes
\left( R:J_{d_{1}}\right) =J_{d_{1}}\otimes R=J_{d_{1}}$. If $\ J_{d_{1}}=R$
and $d_{2}$ is a proper divisor of $n$, we have $J_{d_{1}}\otimes \left(
J_{d_{2}}:J_{d_{1}}\right) =R\otimes \left( J_{d_{2}}:R\right) =R\otimes
J_{d_{2}}=J_{d_{2}}$. From here, we have that the condition $\left(
div\right) $ is satisfied and the proposition is proved.

(ii) It is clear that $J_{p_{i}}\odot J_{p_{i}}=J_{p_{i}}\otimes
J_{p_{i}}=J_{p_{i}}$, and we obtain the following tables:

\begin{equation*}
\begin{tabular}{l|lll}
$\rightarrow $ & $O$ & $J_{p_{i}}$ & $R$ \\ \hline
$O$ & $R$ & $R$ & $R$ \\ 
$J_{p_{i}}$ & $O$ & $R$ & $R$ \\ 
$R$ & $O$ & $J_{p_{i}}$ & $R$%
\end{tabular}%
\ \ \ 
\begin{tabular}{l|lll}
$\odot $ & $O$ & $J_{p_{i}}$ & $R$ \\ \hline
$O$ & $O$ & $O$ & $O$ \\ 
$J_{p_{i}}$ & $O$ & $J_{p_{i}}$ & $J_{p_{i}}$ \\ 
$R$ & $O$ & $J_{p_{i}}$ & $R$%
\end{tabular}%
\ \ ,
\end{equation*}%
therefore a BL-algebra of order $3.$
\end{proof}

\begin{proposition}
\label{Proposition11} \textit{Let} $n=p^{r}$ \textit{with} $p$ \textit{a
prime number,} $p\geq 2,$ $r$ \textit{a positive integer,} $r\geq 2$. 
\textit{We consider the ring} $R=\mathbb{Z}_{n}[X]/\left( X^{2}\right) $. 
\textit{The set} $(Id(\mathbb{Z}_{n}[X]/\left( X^{2}\right) ),\cap
,+,\otimes \rightarrow ,0=\{0\},1=R)$ i\textit{s a BL-chain with} $r+2$ 
\textit{elements. In this way, for a given positive integer }$k\geq 4$, we
can construct all BL-chains with $k$ elements\textit{.}
\end{proposition}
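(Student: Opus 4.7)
The plan is to adapt the approach of Proposition~\ref{Proposition10} and the computations of Remark~\ref{Remark9} to the prime-power case $n=p^r$, where the divisor lattice of $n$ collapses from a Boolean lattice to a chain. First I would describe the ideals of $R=\mathbb{Z}_{p^r}[X]/(X^2)$ following the template of Remark~\ref{Remark7}: the divisors of $n$ are $1,p,p^2,\dots,p^r$, and to each divisor $p^k$ we associate an ideal $J_{p^k}=(\bar{\alpha}X+\bar{\beta}_{p^k})$ with $\bar{\beta}_{p^k}\in(p^k)$; together with $(0)$, this produces $r+2$ ideals, with $J_1=R$ and $J_{p^r}=(X)$ at the two ends.

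Second, I would verify that these ideals form a chain under $\subseteq$. Because $(p^i)\subseteq(p^j)$ in $\mathbb{Z}_{p^r}$ exactly when $i\geq j$, the $J_{p^k}$ are totally ordered:
\[
(0)\;\subseteq\;J_{p^r}=(X)\;\subseteq\;J_{p^{r-1}}\;\subseteq\;\cdots\;\subseteq\;J_p\;\subseteq\;J_1=R.
\]
With this chain in hand, the BL-algebra conditions $(prel)$ and $(div)$ follow by specialising the case analysis in the proof of Proposition~\ref{Proposition10}. Indeed, for any pair $I\subseteq J$ in a chain one has $(J\rightarrow I)\vee(I\rightarrow J)=R$, and the identity $I\otimes(I\rightarrow J)=I\cap J$ reduces to the explicit product rules
\[
J_{p^i}\otimes J_{p^j}=J_{p^{\min(i+j,r)}},\qquad (J_{p^j}:J_{p^i})=J_{p^{\max(j-i,0)}},
\]
together with the boundary cases $I=(0)$ or $J=R$ handled separately as in Remark~\ref{Remark9}. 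The final assertion follows because every integer $k\geq 4$ admits the decomposition $k=r+2$ with $r\geq 2$, so the construction realises a BL-chain of each such cardinality.

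The main obstacle I expect is the first step: proving that the list $\{(0),J_1,J_p,\dots,J_{p^r}\}$ really exhausts $\mathrm{Id}(R)$ when $n=p^r$. The classification in Remark~\ref{Remark7} relies on the distinct primes $p_1,\dots,p_r$ giving independent components in the decomposition of $n$, and that mechanism is absent here. A direct argument would pick an ideal $I\subseteq R$, analyse its image $\pi(I)\subseteq\mathbb{Z}_{p^r}$ under the projection $\pi\colon R\twoheadrightarrow R/(X)$ and its intersection $I\cap(X)$, and verify that these two data collapse onto the single parameter $k$ when $n=p^r$, so that $I=J_{p^k}$ for some $k$.
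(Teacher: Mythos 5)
You have reproduced the paper's own strategy (write down the ideals of $R=\mathbb{Z}_{p^r}[X]/(X^2)$, observe that they form a chain with $r+2$ elements, then verify $(prel)$ and $(div)$ by the same case analysis as in Proposition \ref{Proposition10}), and you have correctly identified the step that carries all the weight: the claim that $\{(0),(X),J_{p^{r-1}},\dots,J_{p},R\}$ exhausts $Id(R)$. That gap, however, cannot be closed, because the claim is false. Take $p=2$, $r=2$, so $R=\mathbb{Z}_4[X]/(X^2)$. The principal ideals $(2)=\{0,2,2X,2X+2\}$ and $(X)=\{0,X,2X,3X\}$ are incomparable: $2\notin(X)$ because every element of $(X)$ has zero constant term, and $X\notin(2)$ because every element of $(2)$ has even coefficient of $X$. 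Hence $Id(R)$ is not a chain, and it has more than $r+2=4$ elements; a direct count (via the $\mathbb{F}_2$-subspaces of $M/M^2$ for $M=(2,X)$) gives the seven ideals $(0),(2X),(2),(X),(X+2),(2,X),R$. The same phenomenon occurs for every $p$ and every $r\geq 2$, since $(p)$ and $(X)$ are always incomparable.

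Consequently your proposed repair — showing that the pair $\bigl(\pi(I),\,I\cap(X)\bigr)$ collapses to a single parameter $k$ — must fail: $(2)$ and $(2,X)$ have the same image under $\pi$ but different intersections with $(X)$, and both occur as ideals. Worse, the family you (and the paper) work with is not even closed under the operations: $(2,X)\otimes(2,X)=(2X)$ is not on the list, and one checks that $\bigl((2):(X)\bigr)+\bigl((X):(2)\bigr)=(2,X)\neq R$, so prelinearity genuinely fails and $Id(\mathbb{Z}_4[X]/(X^2))$ is not a BL-algebra at all. The paper's proof simply asserts the $r+2$-element classification without argument and is subject to exactly the objection you raised, so you have not missed a trick that the authors possess. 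The only case in which the asserted chain structure is correct is $r=1$ (excluded by the hypothesis), where $Id(\mathbb{Z}_p[X]/(X^2))=\{(0)\subset(X)\subset R\}$. To actually produce BL-chains of every length $k\geq 4$ one should start from a ring whose ideal lattice really is a chain (e.g. $\mathbb{Z}_{p^{k-1}}$, or $K[X]/(X^{k-1})$ for a field $K$), or build them as ordinal products as in Theorem \ref{Theorem_4}.
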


\begin{proof}
The ideals in $\mathbb{Z}_{n}$ are of the form: $\left( 0\right) \subseteq
\left( p^{r-1}\right) \subseteq \left( p^{r-2}\right) \subseteq ...\subseteq
\left( p\right) \subseteq \mathbb{Z}_{n}$. The ideal $\left( p^{r-1}\right) $
and the ideal $\left( p\right) $ is the only maximal ideal of $\mathbb{Z}%
_{n} $. The ideals in the ring $R$ are $\left( 0\right) \subseteq \left(
X\right) \subseteq \left( \alpha _{r-1}X+\beta _{r-1}\right) \subseteq
\left( \alpha _{r-2}X+\beta _{r-2}\right) \subseteq ...\subseteq \left(
\alpha _{1}X+\beta _{1}\right) \subseteq R,$ where $\alpha _{i}\in \mathbb{Z}%
_{n},i\in \{1,...,r-1\},$ $\beta _{r-1}\in \left( p^{r-1}\right) ,\beta
_{r-2}\in \left( p^{r-2}\right) ,...,\beta _{1}\in \left( p\right) ,$
therefore $r+2$ ideals. We denote these ideals with $\left( 0\right) ,\left(
X\right) ,$ $I_{p^{r-1}},I_{p^{r-2}},...I_{p},\,R$, with $I_{p}$ the only
maximal ideal in $R$.

First, we will prove $\left( prel\right) $ condition: 
\begin{equation*}
\left( I\rightarrow J\right) \vee \left( J\rightarrow I\right) =(J:I)\vee
(I:J)=\mathbb{Z}_{n}[X]/\left( X^{2}\right) ,
\end{equation*}%
for every $I,J\in Id(\mathbb{Z}_{n}[X]/\left( X^{2}\right) )$.

\textit{Case 1.} We suppose that $I$ and $J$ are proper ideals and $%
I\subseteq J$. We have $\left( I\rightarrow J\right) \vee \left(
J\rightarrow I\right) =(J:I)\vee (I:J)=R\vee I=R$.

\textit{Case 2}$.I=\left( 0\right) $ and $J$ a proper ideal$.$ We have $%
\left( I\rightarrow J\right) \vee \left( J\rightarrow I\right) =(J:\left(
0\right) )\vee (\left( 0\right) :J)=R$. Therefore, the condition $\left(
prel\right) $ is satisfied.

Now, we prove $\left( div\right) $ condition: 
\begin{equation*}
I\otimes \left( I\rightarrow J\right) =I\otimes \left( J:I\right) =I\cap J,
\end{equation*}%
for every $I,J\in Id(\mathbb{Z}_{n}[X]/\left( X^{2}\right) )$.

\textit{Case 1.} We suppose that $I$ and $J$ are proper ideals and $%
I\subseteq J$. We have $I\otimes \left( I\rightarrow J\right) =I\otimes
\left( J:I\right) =I\otimes R=I=I\cap J$. If $J\subseteq I$, we have $%
I\otimes \left( I\rightarrow J\right) =I\otimes \left( J:I\right) =I\otimes
J=J=I\cap J$.

\textit{Case 2}$.I=\left( 0\right) $ and $J$ is a proper ideal$.$We have $%
\left( 0\right) \otimes \left( \left( 0\right) \rightarrow J\right) =\left(
0\right) \otimes \left( J:\left( 0\right) \right) =\left( 0\right) =I\cap J$%
. \ If $I\neq \left( X\right) $ is a proper ideal and $J=\left( 0\right) $,
we have $I\otimes \left( I\rightarrow J\right) =I\otimes \left( \left(
0\right) :I\right) =I\otimes \left( 0\right) =\left( 0\right) $. If $%
I=\left( X\right) $ and $J=\left( 0\right) $, we have $I\otimes \left(
I\rightarrow J\right) =\left( X\right) \otimes \left( \left( 0\right)
:\left( X\right) \right) =\left( X\right) \otimes \left( X\right) =\left(
0\right) $ and $\left( 0\right) \cap \left( X\right) =\left( 0\right) $.
From here, we have that the condition $\left( div\right) $ is satisfied and
the proposition is proved.
\end{proof}

\begin{example}
\label{Example12} In Proposition \ref{Proposition10}, we take $n=2\cdot 3\,$%
, therefore the ideals of $\ \mathbb{Z}_{6}$ are $\left( 0\right) ,\left(
2\right) ,\left( 3\right) ,\mathbb{Z}_{6}$, with $\left( 2\right) $ and $%
\left( 3\right) $ maximal ideals. The ring $\mathbb{Z}_{6}[X]/\left(
X^{2}\right) $ has $5$ ideals: $O=\left( 0\right) \subset A=\left( X\right)
,B=\left( \alpha X+\beta \right) ,C=\left( \gamma X+\delta \right) ,E=%
\mathbb{Z}_{6}$, with $\alpha ,\gamma \in $ $\mathbb{Z}_{4},\beta \in \left(
2\right) $ and $\delta \in \left( 3\right) $. From the following tables, we
have a BL-structure on $Id(\mathbb{Z}_{6}[X]/\left( X^{2}\right) )$:

\begin{equation*}
\begin{tabular}{l|lllll}
$\rightarrow $ & $O$ & $A$ & $B$ & $C$ & $E$ \\ \hline
$O$ & $E$ & $E$ & $E$ & $E$ & $E$ \\ 
$A$ & $A$ & $E$ & $E$ & $E$ & $E$ \\ 
$B$ & $O$ & $A$ & $E$ & $C$ & $E$ \\ 
$C$ & $O$ & $A$ & $B$ & $E$ & $E$ \\ 
$E$ & $O$ & $A$ & $B$ & $C$ & $E$%
\end{tabular}%
\ \ \ 
\begin{tabular}{l|lllll}
$\odot $ & $O$ & $A$ & $B$ & $C$ & $E$ \\ \hline
$O$ & $O$ & $O$ & $O$ & $O$ & $O$ \\ 
$A$ & $O$ & $O$ & $A$ & $A$ & $A$ \\ 
$B$ & $O$ & $A$ & $B$ & $A$ & $B$ \\ 
$C$ & $O$ & $A$ & $A$ & $C$ & $C$ \\ 
$E$ & $O$ & $A$ & $B$ & $C$ & $E$%
\end{tabular}%
\ \ .
\end{equation*}

From Proposition \ref{Proposition10}, if we consider $J_{p_{i}}=$ $B$, we
have the following BL-algebra of order $3$:

\begin{equation*}
\begin{tabular}{l|lll}
$\rightarrow $ & $O$ & $B$ & $E$ \\ \hline
$O$ & $E$ & $E$ & $E$ \\ 
$B$ & $O$ & $E$ & $E$ \\ 
$E$ & $O$ & $B$ & $E$%
\end{tabular}%
\ \ \ 
\begin{tabular}{l|lll}
$\odot $ & $O$ & $B$ & $E$ \\ \hline
$O$ & $O$ & $O$ & $O$ \\ 
$B$ & $O$ & $B$ & $B$ \\ 
$E$ & $O$ & $B$ & $E$%
\end{tabular}%
\ \ .
\end{equation*}
\end{example}

\begin{example}
\label{Example13} In Proposition \ref{Proposition10}, we take $n=2\cdot
3\cdot 5\,$, therefore the ideals of the ring $\mathbb{Z}_{30}$ are $\left(
0\right) ,\left( 2\right) ,\left( 3\right) ,\left( 5\right) ,\left( 6\right)
,\left( 10\right) ,\left( 15\right) ,\mathbb{Z}_{30}$, with $\left( 2\right)
,\left( 3\right) $ and $\left( 5\right) $ maximal ideals. The ring $\mathbb{Z%
}_{30}[X]/\left( X^{2}\right) $ has $9$ ideals: $O=\left( 0\right) \subset
A=\left( X\right) ,B=\left( \alpha _{1}X+\beta _{1}\right) ,C=\left( \alpha
_{2}X+\beta _{2}\right) ,$\newline
$D=\left( \alpha _{3}X+\beta _{3}\right) ,E=\left( \alpha _{4}X+\beta
_{4}\right) ,F=\left( \alpha _{5}X+\beta _{5}\right) ,G=\left( \alpha
_{6}X+\beta _{6}\right) ,R=\mathbb{Z}_{30}$, with $\alpha _{i}\in $ $\mathbb{%
Z}_{30},i\in \{1,2,3,4,5,6\},\beta _{1}\in \left( 6\right) $, $\beta _{2}\in
\left( 10\right) ,\beta _{3}\in \left( 15\right) ,\beta _{4}\in \left(
2\right) ,\beta _{5}\in \left( 3\right) $ and $\beta _{6}\in \left( 5\right) 
$. The ideals $E,F$ and $G$ are maximals. From the following tables, we have
a BL-structure on $Id(\mathbb{Z}_{30}[X]/\left( X^{2}\right) )$:%
\begin{equation*}
\begin{tabular}{l|lllllllll}
$\rightarrow $ & $O$ & $A$ & $B$ & $C$ & $D$ & $E$ & $F$ & $G$ & $R$ \\ 
\hline
$O$ & $R$ & $R$ & $R$ & $R$ & $R$ & $R$ & $R$ & $R$ & $R$ \\ 
$A$ & $A$ & $R$ & $R$ & $R$ & $R$ & $R$ & $R$ & $R$ & $R$ \\ 
$B$ & $O$ & $A$ & $R$ & $C$ & $D$ & $R$ & $R$ & $G$ & $R$ \\ 
$C$ & $O$ & $A$ & $B$ & $R$ & $D$ & $R$ & $F$ & $R$ & $R$ \\ 
$D$ & $O$ & $A$ & $B$ & $C$ & $R$ & $E$ & $R$ & $R$ & $R$ \\ 
$E$ & $O$ & $A$ & $B$ & $C$ & $D$ & $R$ & $F$ & $G$ & $R$ \\ 
$F$ & $O$ & $A$ & $B$ & $C$ & $D$ & $E$ & $R$ & $G$ & $R$ \\ 
$G$ & $O$ & $A$ & $B$ & $C$ & $D$ & $E$ & $F$ & $R$ & $R$ \\ 
$R$ & $O$ & $A$ & $B$ & $C$ & $D$ & $E$ & $F$ & $G$ & $R$%
\end{tabular}%
\ \ 
\begin{tabular}{l|lllllllll}
$\odot $ & $O$ & $A$ & $B$ & $C$ & $D$ & $E$ & $F$ & $G$ & $R$ \\ \hline
$O$ & $O$ & $O$ & $O$ & $O$ & $O$ & $O$ & $O$ & $O$ & $O$ \\ 
$A$ & $O$ & $O$ & $A$ & $A$ & $A$ & $A$ & $A$ & $A$ & $A$ \\ 
$B$ & $O$ & $A$ & $B$ & $A$ & $A$ & $B$ & $B$ & $A$ & $B$ \\ 
$C$ & $O$ & $A$ & $A$ & $C$ & $A$ & $C$ & $C$ & $A$ & $C$ \\ 
$D$ & $O$ & $A$ & $A$ & $A$ & $D$ & $A$ & $D$ & $D$ & $D$ \\ 
$E$ & $O$ & $A$ & $B$ & $C$ & $A$ & $E$ & $A$ & $A$ & $E$ \\ 
$F$ & $O$ & $A$ & $B$ & $A$ & $D$ & $A$ & $F$ & $A$ & $F$ \\ 
$G$ & $O$ & $A$ & $A$ & $C$ & $D$ & $A$ & $A$ & $G$ & $G$ \\ 
$R$ & $O$ & $A$ & $B$ & $C$ & $D$ & $E$ & $F$ & $G$ & $R$%
\end{tabular}%
\text{.}
\end{equation*}
\end{example}

\begin{example}
\label{Example14} In Proposition \ref{Proposition11}, we consider $p=2,r=2$.
The ideals in $\left( \mathbb{Z}_{4},+,\cdot \right) $ are $\left( 0\right)
\subset \left( 2\right) \subset \mathbb{Z}_{4}$ and $\mathbb{Z}_{4}$ is a
local ring. The ring $\mathbb{Z}_{4}[X]/\left( X^{2}\right) $ has $4$
ideals: $O=\left( 0\right) \subset A=\left( X\right) \subset B=\left( \alpha
X+\beta \right) \subset E=\mathbb{Z}_{4}$, with $\alpha ,\gamma \in $ $%
\mathbb{Z}_{4},\beta \in \left( 2\right) $. From the following tables, we
have a BL-structure for $Id(\mathbb{Z}_{4}[X]/\left( X^{2}\right) )$:

\begin{equation*}
\begin{tabular}{l|llll}
$\rightarrow $ & $O$ & $A$ & $B$ & $E$ \\ \hline
$O$ & $E$ & $E$ & $E$ & $E$ \\ 
$A$ & $A$ & $E$ & $E$ & $E$ \\ 
$B$ & $O$ & $A$ & $E$ & $E$ \\ 
$E$ & $O$ & $A$ & $B$ & $E$%
\end{tabular}%
\ \ \ 
\begin{tabular}{l|llll}
$\odot $ & $O$ & $A$ & $B$ & $E$ \\ \hline
$O$ & $O$ & $O$ & $O$ & $O$ \\ 
$A$ & $O$ & $O$ & $A$ & $A$ \\ 
$B$ & $O$ & $A$ & $A$ & $B$ \\ 
$E$ & $O$ & $A$ & $B$ & $E$%
\end{tabular}%
\ \ .
\end{equation*}
\end{example}

\begin{example}
\label{Example15} In Proposition \ref{Proposition11}, we consider $p=2,r=3$.
The ideals in $\left( \mathbb{Z}_{8},+,\cdot \right) $ are $\left( 0\right)
\subset \left( 4\right) \subset \left( 2\right) \subset \mathbb{Z}_{8}$. The
ring $\mathbb{Z}_{8}[X]/\left( X^{2}\right) $ has $5$ ideals: $O=\left(
0\right) \subset A=\left( X\right) \subset B=\left( \alpha X+\beta \right)
\subset C=\left( \gamma X+\delta \right) \subset E=\mathbb{Z}_{8}$, with $%
\alpha ,\gamma \in $ $\mathbb{Z}_{8},\beta \in \left( 4\right) $ and $\delta
\in \left( 2\right) $. From the following tables, we have a BL-structure for 
$Id(\mathbb{Z}_{8}[X]/\left( X^{2}\right) )$:
\end{example}

\begin{equation*}
\begin{tabular}{l|lllll}
$\rightarrow $ & $O$ & $A$ & $B$ & $C$ & $E$ \\ \hline
$O$ & $E$ & $E$ & $E$ & $E$ & $E$ \\ 
$A$ & $A$ & $E$ & $E$ & $E$ & $E$ \\ 
$B$ & $O$ & $A$ & $E$ & $E$ & $E$ \\ 
$C$ & $O$ & $A$ & $B$ & $E$ & $E$ \\ 
$E$ & $O$ & $A$ & $B$ & $C$ & $E$%
\end{tabular}%
\ \ \ 
\begin{tabular}{l|lllll}
$\odot $ & $O$ & $A$ & $B$ & $C$ & $E$ \\ \hline
$O$ & $O$ & $O$ & $O$ & $O$ & $O$ \\ 
$A$ & $O$ & $O$ & $A$ & $A$ & $A$ \\ 
$B$ & $O$ & $A$ & $A$ & $A$ & $B$ \\ 
$C$ & $O$ & $A$ & $A$ & $B$ & $C$ \\ 
$E$ & $O$ & $A$ & $B$ & $C$ & $E$%
\end{tabular}%
\ \ .
\end{equation*}%
In the following, we present a way to generate new finite\ BL-algebras using
the ordinal product of residuated lattices.

We recall that, in \cite{[I; 09]}, Iorgulescu study the influence of the
conditions $(prel)$ and $(div)$ on the ordinal product of two residuated
lattices.

It is known that, if $\mathcal{L}_{1}=(L_{1},\wedge _{1},\vee _{1},\odot
_{1},\rightarrow _{1},0_{1},1_{1})$ and $\mathcal{L}_{2}=(L_{2},\wedge
_{2},\vee _{2},\odot _{2},\rightarrow _{2},0_{2},1_{2})$ are two residuated
lattices such that $1_{1}=0_{2}$ and $(M_{1}\backslash \{1_{1}\})\cap
(M_{2}\backslash \{0_{2}\})=\oslash ,$ then, the ordinal product of $%
\mathcal{L}_{1}$ and $\mathcal{L}_{2}$ is the residuated lattice $\mathcal{L}%
_{1}\boxtimes \mathcal{L}_{2}=(L_{1}\cup L_{2},\wedge ,\vee ,\odot
,\rightarrow ,0,1)$ where

\begin{equation*}
0=0_{1}\text{ and }1=1_{2},
\end{equation*}%
\begin{equation*}
x\leq y\text{ if }(x,y\in L_{1}\text{ and }x\leq _{1}y)\text{ or }(x,y\in
L_{2}\text{ and }x\leq _{2}y)\text{ or }(x\in L_{1}\text{ and }y\in L_{2})%
\text{ ,}
\end{equation*}

\begin{equation*}
x\wedge y=\left\{ 
\begin{array}{c}
x\wedge _{1}y,\text{ if }x,y\in L_{1}, \\ 
x\wedge _{2}y,\text{ if }x,y\in L_{2}, \\ 
x,\text{ if }x\in L_{1}\text{ and }y\in L_{2}%
\end{array}%
\right.
\end{equation*}%
\begin{equation*}
x\vee y=\left\{ 
\begin{array}{c}
x\vee _{1}y,\text{ if }x,y\in L_{1}, \\ 
x\vee _{2}y,\text{ if }x,y\in L_{2}, \\ 
y,\text{ if }x\in L_{1}\text{ and }y\in L_{2}%
\end{array}%
\right.
\end{equation*}%
\begin{equation*}
x\rightarrow y=\left\{ 
\begin{array}{c}
1,\text{ if }x\leq y, \\ 
x\rightarrow _{i}y,\text{ if }x\nleq y,\text{ }x,y\in L_{i},\text{ }i=1,2,
\\ 
y,\text{ if }x\nleq y,\text{ }x\in L_{2},\text{ }y\in L_{1}\backslash
\{1_{1}\}.%
\end{array}%
\right.
\end{equation*}%
\begin{equation*}
x\odot y=\left\{ 
\begin{array}{c}
x\odot _{1}y,\text{ if }x,y\in L_{1}, \\ 
x\odot _{2}y,\text{ if }x,y\in L_{2}, \\ 
x,\text{ if }x\in L_{1}\backslash \{1_{1}\}\text{ and }y\in L_{2}.%
\end{array}%
\right.
\end{equation*}

\bigskip The ordinal product is associative, but is not commutative, see 
\cite{[I; 09]}.

\begin{proposition}
\label{Proposition_5} ( \cite{[I; 09]}, Corollary 3.5.10) Let $\mathcal{L}%
_{1}$ and $\mathcal{L}_{2}$ be BL-algebras.

\textit{(i) }\ If $\mathcal{L}_{1}$ is a chain, then the ordinal product $%
\mathcal{L}_{1}\boxtimes \mathcal{L}_{2}$ \textit{\ is a BL-algebra.}

\textit{(ii) }$\ \ $If $\mathcal{L}_{1}$ is not a chain, then the ordinal
product $\mathcal{L}_{1}\boxtimes \mathcal{L}_{2}$ \textit{\ is only a
residuated lattice satisfying (div) condition.}
\end{proposition}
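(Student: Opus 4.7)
The plan is to verify the (prel) and (div) identities for the ordinal product $\mathcal{L}_1 \boxtimes \mathcal{L}_2$ by case analysis on the location of $x, y$ in $L_1 \cup L_2$. The residuated-lattice structure is already given, so the only work is to check these two identities. The argument naturally splits into four configurations: both in $L_1$, both in $L_2$, $x \in L_1$ with $y \in L_2$, and vice versa, with careful handling of the shared element $1_1 = 0_2$.

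For (div), each homogeneous case reduces to divisibility in the corresponding BL-algebra $\mathcal{L}_i$. In the mixed case $x \in L_1$, $y \in L_2$ we have $x \leq y$, so $x \rightarrow y = 1_2$ and the mixed-product rule gives $x \odot 1_2 = x = x \wedge y$. In the symmetric case $x \in L_2$, $y \in L_1 \setminus \{1_1\}$, the defining rules yield $x \rightarrow y = y$ and $x \odot y = y = x \wedge y$. Degenerate instances involving $1_1 = 0_2$ reduce to one of the already-treated cases by identification. Hence (div) holds unconditionally, which takes care of the positive statement in part (ii) and half of part (i).

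For (prel), the driving observation is that whenever the ordinal order compares $x$ and $y$, one of the implications equals $1_2$ and the join is the top $1 = 1_2$. This disposes of both mixed cases immediately. For $x, y \in L_2$, either they are comparable (same argument) or they are incomparable, in which case $(x \rightarrow y) \vee (y \rightarrow x) = (x \rightarrow_2 y) \vee_2 (y \rightarrow_2 x) = 1_2$ by (prel) in $\mathcal{L}_2$, and $1_2$ is the ordinal-product top. So the only potentially dangerous case is $x, y \in L_1$: if $\mathcal{L}_1$ is a chain, any two such elements are comparable and the trivial argument applies, completing (i). If $\mathcal{L}_1$ is not a chain, choose incomparable $x, y \in L_1$; then $(x \rightarrow y) \vee (y \rightarrow x) = (x \rightarrow_1 y) \vee_1 (y \rightarrow_1 x) = 1_1$ by (prel) in $\mathcal{L}_1$, and since $1_1 \neq 1_2$ in a non-degenerate ordinal product, (prel) fails, establishing (ii).

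The main obstacle here is less mathematical than bookkeeping. The asymmetry between the two parts comes entirely from the identification $1_1 = 0_2$, which demotes $1_1$ from the top of $\mathcal{L}_1$ to a non-top element of the ordinal product; any successful proof has to keep careful track of which ``$1$'' is which. One must also confirm that incomparable elements of $L_1$ lie strictly below $1_1$ whenever $\mathcal{L}_1$ is not a chain, so that the counterexample cannot be rescued by a degenerate edge case; this is immediate because every element of $L_1$ is dominated by $1_1$.
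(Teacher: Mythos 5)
Your proposal is correct. Note that the paper does not prove this proposition at all: it is quoted from Iorgulescu's book (Corollary 3.5.10) as a known fact, so there is no in-paper argument to compare against. What you supply is the standard direct verification, and it holds up: (div) is checked in all four configurations, with the mixed cases $x\in L_{1},y\in L_{2}$ and $x\in L_{2},y\in L_{1}\backslash \{1_{1}\}$ handled exactly as the definitions of $\rightarrow $ and $\odot $ in the ordinal product dictate; and for (prel) you correctly isolate the only problematic configuration, namely incomparable $x,y\in L_{1}$, where the join $(x\rightarrow _{1}y)\vee _{1}(y\rightarrow _{1}x)=1_{1}$ lands at $1_{1}\neq 1_{2}$ rather than at the top of $\mathcal{L}_{1}\boxtimes \mathcal{L}_{2}$. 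Your closing observation that incomparable elements of $L_{1}$ necessarily lie in $L_{1}\backslash \{0_{1},1_{1}\}$ is the right way to rule out degenerate rescues, and the caveat that $\mathcal{L}_{2}$ must be nontrivial (so that $1_{1}\neq 1_{2}$) is consistent with the paper's standing hypothesis $(M_{1}\backslash \{1_{1}\})\cap (M_{2}\backslash \{0_{2}\})=\oslash $ with $1_{1}=0_{2}$ for a genuine ordinal product.
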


\begin{remark}
\label{Remark_3} (i) An ordinal product of two BL-chains is a BL-chain.
Indeed, using the definition of implication in an ordinal product for every $%
x,y$ we have $x\rightarrow y=1$ or $y\rightarrow x=1.$

(ii) An ordinal product of two MV-algebras is a BL-algebra which is not an
MV-algebra. Indeed, if $\mathcal{L}_{1}$ and $\mathcal{L}_{2}$ are two
MV-algebras, using Proposition \ref{Proposition_5}, the residuated lattice $%
\mathcal{L}_{1}\boxtimes \mathcal{L}_{2}$ \textit{\ is a BL-algebra in
which, we have} $(1_{1})^{\ast \ast }=(1_{1}\rightarrow 0_{1})^{\ast
}=(0_{1})^{\ast }=0_{1}\rightarrow 0_{1}=1=1_{2}\neq 1_{1},$ so, $\mathcal{L}%
_{1}\boxtimes \mathcal{L}_{2}$ \textit{\ is not an MV-algebra.}
\end{remark}

For a natural number $n\geq 2$ we consider the decomposition (which is not
unique) of $n$ in factors greater than $1$ and we denote by $\pi (n)$ the
number of all such decompositions. Obviously, if $n$ is prime then $\pi
(n)=0.$

We recall that an MV-algebra is finite iff it is isomorphic to a finite
product of MV-chains, see \cite{[HR; 99]}. \ Using this result, in \cite%
{[FP; 22]} we showed that for every natural number $n\geq 2$ there are $\pi
(n)+1$ non-isomorphic MV-algebras with $n$ elements of which only one is a
chain. Moreover, all finite MV-algebras (up to an isomorphisms) with $n$
elememts correspond to finite commutative ring $A$ in which $\left\vert
Id(A)\right\vert =n.$

In \textbf{Table 1} we sum briefly describe a way to generate finite\
MV-algebras $M$ with $2\leq n\leq 6$ elements using commutative rings, see 
\cite{[FP; 22]}.

\begin{equation*}
\text{\textbf{Table 1:}}
\end{equation*}

\textbf{\ }%
\begin{tabular}{lll}
$\left\vert M\right\vert \mathbf{=n}$ & \textbf{Nr of MV} & \textbf{Rings
which generates MV} \\ 
$n=2$ & $1$ & $Id(\mathbb{Z}_{2})$ (chain) \\ 
$n=3$ & $1$ & $Id(\mathbb{Z}_{4})$ (chain) \\ 
$n=4$ & $2$ & $Id(\mathbb{Z}_{8})$ (chain) and $Id(\mathbb{Z}_{2}\times 
\mathbb{Z}_{2})$ \\ 
$n=5$ & $1$ & $Id(\mathbb{Z}_{16})$ (chain) \\ 
$n=6$ & $2$ & $Id(\mathbb{Z}_{32})$ (chain) and $Id(\mathbb{Z}_{2}\times 
\mathbb{Z}_{4})$ \\ 
$n=7$ & $1$ & $Id(\mathbb{Z}_{64})$ (chain) \\ 
$n=8$ & $3$ & $Id(\mathbb{Z}_{128})$ (chain) and $Id(\mathbb{Z}_{2}\times 
\mathbb{Z}_{8})$ and $Id(\mathbb{Z}_{2}\times \mathbb{Z}_{2}\times \mathbb{Z}%
_{2})$%
\end{tabular}

Using the construction of ordinal product, Proposition \ref{Proposition_5}
and Remark \ref{Remark_3}, we can generate BL-algebras (which are not
MV-algebras) using commutative rings.

\begin{example}
\label{Example_5} In \cite{[FP; 22]} we show that there is one MV-algebra
with 3 elements (up to an isomorphism), see Table 1.\ This MV-algebras is
isomorphic to $Id(\mathbb{Z}_{4})$ and is a chain.\ To generate a BL-chain
with 3 elements (which is not an MV-algebra) using the ordinal product we
must consider only the MV-algebra with 2 elements (which is in fact a
Boolean algebra). In \ the commutative ring $\ (\mathbb{Z}_{2},+,\cdot )$
the ideals are $Id(\mathbb{Z}_{2})=\{\{\widehat{0}\},$ $\mathbb{Z}_{2}\}$.
Using Proposition \ref{Proposition1}, $(Id\left( \mathbb{Z}_{2}\right) ,\cap
,+,\otimes \rightarrow ,0=\{0\},1=\mathbb{Z}_{2})$ is an MV-chain with the
following operations: 
\begin{equation*}
\text{ }%
\begin{tabular}{l|ll}
$\rightarrow $ & $\{\widehat{0}\}$ & $\mathbb{Z}_{2}$ \\ \hline
$\{\widehat{0}\}$ & $\mathbb{Z}_{2}$ & $\mathbb{Z}_{2}$ \\ 
$\mathbb{Z}_{2}$ & $\{\widehat{0}\}$ & $\mathbb{Z}_{2}$%
\end{tabular}%
;\text{ }%
\begin{tabular}{l|ll}
$\otimes =\cap $ & $\{\widehat{0}\}$ & $\mathbb{Z}_{2}$ \\ \hline
$\{\widehat{0}\}$ & $\{\widehat{0}\}$ & $\{\widehat{0}\}$ \\ 
$\mathbb{Z}_{2}$ & $\{\widehat{0}\}$ & $\mathbb{Z}_{2}$%
\end{tabular}%
\text{ and }%
\begin{tabular}{l|ll}
$+$ & $\{\widehat{0}\}$ & $\mathbb{Z}_{2}$ \\ \hline
$\{\widehat{0}\}$ & $\{\widehat{0}\}$ & $\mathbb{Z}_{2}$ \\ 
$\mathbb{Z}_{2}$ & $\mathbb{Z}_{2}$ & $\mathbb{Z}_{2}$%
\end{tabular}%
.
\end{equation*}%
Now we consider two MV-algebras isomorphic with $Id\left( \mathbb{Z}%
_{2}\right) $ denoted $\mathcal{L}_{1}=(L_{1}=\{0,a\},\wedge ,\vee ,\odot
,\rightarrow ,0,a)$ and $\mathcal{L}_{2}=(L_{2}=\{a,1\},\wedge ,\vee ,\odot
,\rightarrow ,a,1).$ Using Proposition \ref{Proposition_5} we can construct
the BL-algebra $\mathcal{L}_{1}\boxtimes \mathcal{L}_{2}=(L_{1}\cup
L_{2}=\{0,a.1\},\wedge ,\vee ,\odot ,\rightarrow ,0,1)$ with $0\leq a\leq 1$
and the following operations:%
\begin{equation*}
\begin{tabular}{l|lll}
$\rightarrow $ & $0$ & $a$ & $1$ \\ \hline
$0$ & $1$ & $1$ & $1$ \\ 
$a$ & $0$ & $1$ & $1$ \\ 
$1$ & $0$ & $a$ & $1$%
\end{tabular}%
\text{ and }%
\begin{tabular}{l|lll}
$\odot $ & $0$ & $a$ & $1$ \\ \hline
$0$ & $0$ & $0$ & $0$ \\ 
$a$ & $0$ & $a$ & $a$ \\ 
$1$ & $0$ & $a$ & $1$%
\end{tabular}%
\text{ ,}
\end{equation*}%
obtaining the same BL-algebra of order $3$ as in Example \ref{Example12}.
\end{example}

\begin{example}
\label{Example_6} To generate the non-linearly ordered BL-algebra with 5
elements from Example \ref{Example_3}, we consider the commutative rings $\ (%
\mathbb{Z}_{2},+,\cdot )$ and $(\mathbb{Z}_{2}\times \mathbb{Z}_{2},+,\cdot
).$ For $\mathbb{Z}_{2}\times \mathbb{Z}_{2}=\{\left( \widehat{0},\widehat{0}%
\right) ,\left( \widehat{0},\widehat{1}\right) ,\left( \widehat{1},\widehat{0%
}\right) ,\left( \widehat{1},\widehat{1}\right) \}$, we obtain the lattice $%
Id\left( \mathbb{Z}_{2}\times \mathbb{Z}_{2}\right) =\{\left( \widehat{0},%
\widehat{0}\right) ,\{\left( \widehat{0},\widehat{0}\right) ,\left( \widehat{%
0},\widehat{1}\right) \},\{\left( \widehat{0},\widehat{0}\right) ,\left( 
\widehat{1},\widehat{0}\right) \},\mathbb{Z}_{2}\times \mathbb{Z}%
_{2}\}=\{O,R,B,E\}$, which is an MV-algebra $(Id\left( \mathbb{Z}_{2}\times 
\mathbb{Z}_{2}\right) ,\cap ,+,\otimes \rightarrow ,0=\{\left( \widehat{0},%
\widehat{0}\right) \},1=\mathbb{Z}_{2}\times \mathbb{Z}_{2}),$ see
Proposition \ref{Proposition1}. In $Id\left( \mathbb{Z}_{2}\times \mathbb{Z}%
_{2}\right) $ we have the following operations: 
\begin{equation*}
\begin{tabular}{l|llll}
$\rightarrow $ & $O$ & $R$ & $B$ & $E$ \\ \hline
$O$ & $E$ & $E$ & $E$ & $E$ \\ 
$R$ & $B$ & $E$ & $B$ & $E$ \\ 
$B$ & $R$ & $R$ & $E$ & $E$ \\ 
$E$ & $O$ & $R$ & $B$ & $E$%
\end{tabular}%
,\text{ }%
\begin{tabular}{l|llll}
$\otimes =\cap $ & $O$ & $R$ & $B$ & $E$ \\ \hline
$O$ & $O$ & $O$ & $O$ & $O$ \\ 
$R$ & $O$ & $R$ & $O$ & $R$ \\ 
$B$ & $O$ & $O$ & $B$ & $B$ \\ 
$E$ & $O$ & $R$ & $B$ & $E$%
\end{tabular}%
\text{ and }%
\begin{tabular}{l|llll}
$+$ & $O$ & $R$ & $B$ & $E$ \\ \hline
$O$ & $O$ & $R$ & $B$ & $E$ \\ 
$R$ & $R$ & $R$ & $E$ & $E$ \\ 
$B$ & $B$ & $E$ & $B$ & $E$ \\ 
$E$ & $E$ & $E$ & $E$ & $E$%
\end{tabular}%
.
\end{equation*}%
If we consider two MV-algebras isomorphic with $(Id\left( \mathbb{Z}%
_{2}\right) ,\cap ,+,\otimes \rightarrow ,0=\{0\},1=\mathbb{Z}_{2})$ and $%
(Id\left( \mathbb{Z}_{2}\times \mathbb{Z}_{2}\right) ,\cap ,+,\otimes
\rightarrow ,0=\{\left( \widehat{0},\widehat{0}\right) \},1=\mathbb{Z}%
_{2}\times \mathbb{Z}_{2}),$ denoted by $\mathcal{L}_{1}=(L_{1}=\{0,c\},%
\wedge _{1},\vee _{1},\odot _{1},\rightarrow _{1},0,c)$ and $\mathcal{L}%
_{2}=(L_{2}=\{c,a,b,1\},\wedge _{2},\vee _{2},\odot _{2},\rightarrow
_{2},c,1),$ using Proposition \ref{Proposition_5} we generate the BL-algebra 
$\mathcal{L}_{1}\boxtimes \mathcal{L}_{2}=(L_{1}\cup
L_{2}=\{0,c,a,b,1\},\wedge ,\vee ,\odot ,\rightarrow ,0,1)$ from Example \ref%
{Example_3}.
\end{example}

From Proposition \ref{Proposition_5} and Remark \ref{Remark_3}, we deduce
that:

\begin{proposition}
\label{Remark_4} (i) To generate a BL-algebra with $n$ elements as an
ordinal product $\mathcal{L}_{1}\boxtimes \mathcal{L}_{2}$ of $\ $two
Bl-algebras $\mathcal{L}_{1}$ and $\mathcal{L}_{2}$ we have the following
possibilities:%
\begin{equation*}
\mathcal{L}_{1}\text{ is a BL-chain with }i\text{ elements and }\mathcal{L}%
_{2}\text{ is a BL-algebra with }j\text{ elements, }
\end{equation*}%
and%
\begin{equation*}
\mathcal{L}_{1}\text{ is a BL-chain with }j\text{ elements and }\mathcal{L}%
_{2}\text{ is a BL-algebra with }i\text{ elements, }
\end{equation*}%
or%
\begin{equation*}
\mathcal{L}_{1}\text{ is a BL-chain with }k\text{ elements and }\mathcal{L}%
_{2}\text{ is a BL-algebra with }k\text{ elements, }
\end{equation*}%
for $i,j\geq 2,i+j=n+1,$ $i<j$ and $k\geq 2,$ $k=\frac{n+1}{2}$ $\in N,$

(ii) To generate a BL-chain with $n$ elements as the ordinal product $%
\mathcal{L}_{1}\boxtimes \mathcal{L}_{2}$ of $\ $two Bl-algebras $\mathcal{L}%
_{1}$ and $\mathcal{L}_{2}$ we have the following possibilities:%
\begin{equation*}
\mathcal{L}_{1}\text{ is a BL-chain with }i\text{ elements and }\mathcal{L}%
_{2}\text{ is a BL-chain with }j\text{ elements, }
\end{equation*}%
and%
\begin{equation*}
\mathcal{L}_{1}\text{ is a BL-chain with }j\text{ elements and }\mathcal{L}%
_{2}\text{ is a BL-chain with }i\text{ elements, }
\end{equation*}%
or%
\begin{equation*}
\mathcal{L}_{1}\text{ is a BL-chain with }k\text{ elements and }\mathcal{L}%
_{2}\text{ is a BL-chain with }k\text{ elements, }
\end{equation*}%
for $i,j\geq 2,i+j=n+1,$ $i<j$ and $k\geq 2,$ $k=\frac{n+1}{2}$ $\in N.$
\end{proposition}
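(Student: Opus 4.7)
The plan is to reduce the statement to two elementary observations: a cardinality count for the ordinal product, and the structural constraints already established in Proposition \ref{Proposition_5} and Remark \ref{Remark_3}. First I would record the key counting fact: if $\mathcal{L}_1$ has $i$ elements and $\mathcal{L}_2$ has $j$ elements, then the underlying set $L_1 \cup L_2$ of the ordinal product $\mathcal{L}_1 \boxtimes \mathcal{L}_2$ has exactly $i + j - 1$ elements, because the construction identifies $1_1$ with $0_2$ and otherwise requires $(L_1 \setminus \{1_1\}) \cap (L_2 \setminus \{0_2\}) = \oslash$. Hence the ordinal product has $n$ elements if and only if $i + j = n + 1$.

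For part (i), by Proposition \ref{Proposition_5}(i), in order for $\mathcal{L}_1 \boxtimes \mathcal{L}_2$ to be a BL-algebra when built from two BL-algebras, $\mathcal{L}_1$ must be a BL-chain (otherwise, by Proposition \ref{Proposition_5}(ii), the product is only a residuated lattice satisfying $(div)$, not a BL-algebra). No constraint beyond being a BL-algebra is imposed on $\mathcal{L}_2$. Combined with the equation $i + j = n + 1$ and the requirement $i, j \geq 2$ (each factor must have at least a top and a bottom, distinct, to make an ordinal product meaningful), I would enumerate the admissible pairs: either $i < j$, giving the first listed case with $\mathcal{L}_1$ the chain of size $i$ and $\mathcal{L}_2$ the BL-algebra of size $j$; or the symmetric pair $(j, i)$ with $j > i$, giving the second listed case (with the chain now being the larger of the two); or $i = j = k = (n+1)/2$, which only arises when $n$ is odd, giving the third case.

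For part (ii) the argument is identical, except that I invoke Remark \ref{Remark_3}(i), which asserts that the ordinal product of two BL-chains is again a BL-chain, and the converse check that the product being a chain forces both factors to be chains (if, say, $x, y \in L_2$ were incomparable, then by the definition of $\leq$ in $\mathcal{L}_1 \boxtimes \mathcal{L}_2$ they would still be incomparable in the product). Thus $\mathcal{L}_2$ must also be a BL-chain, and the same enumeration of pairs $(i, j)$ with $i + j = n + 1$, $i, j \geq 2$ yields the three listed cases.

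The argument is essentially bookkeeping; the only mild subtlety is being careful that the three listed cases partition, rather than overlap with, the set of unordered factorizations: the first two cases together account for all $i \neq j$ (treating ordered pairs so that both orderings are recorded), while the third captures the unique self-paired case when $n+1$ is even. I do not expect a genuine obstacle here, since Proposition \ref{Proposition_5} and Remark \ref{Remark_3} already do all of the structural work.
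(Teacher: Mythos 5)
Your proposal is correct and follows essentially the same route as the paper, which states this proposition as a direct consequence of Proposition \ref{Proposition_5} and Remark \ref{Remark_3} without writing out the details. Your write-up merely makes explicit the two ingredients the paper leaves implicit: the cardinality count $|L_1\cup L_2|=i+j-1$ coming from the identification $1_1=0_2$, and the fact that $\mathcal{L}_1$ must be a chain (and, for part (ii), that the product is a chain iff both factors are).
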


We make the following notations: 
\begin{equation*}
\mathcal{BL}_{n}=\text{the set of BL-algebras with }n\text{ elements}
\end{equation*}%
\begin{equation*}
\mathcal{BL}_{n}(c)=\text{the set of BL-chains with }n\text{ elements}
\end{equation*}%
\begin{equation*}
\mathcal{MV}_{n}=\text{the set of MV-algebras with }n\text{ elements}
\end{equation*}%
\begin{equation*}
\mathcal{MV}_{n}(c)=\text{the set of MV-algebras with }n\text{ elements.}
\end{equation*}

\begin{theorem}
\label{Theorem_4} (i) All finite Bl-algebras (up to an isomorphism) which
are not MV-algebras with $2\leq n\leq 5$ elements can be generated using the
ordinal product of BL-algebras.

(ii) The number of non-isomorphic BL-algebras with $n$ elements (with $2\leq
n\leq 5$) is%
\begin{equation*}
\left\vert \mathcal{BL}_{2}\right\vert =\left\vert \mathcal{MV}%
_{2}\right\vert =\pi (2)+1,
\end{equation*}%
\begin{equation*}
\left\vert \mathcal{BL}_{3}\right\vert =\left\vert \mathcal{MV}%
_{3}\right\vert +\left\vert \mathcal{BL}_{2}\right\vert =\pi (3)+\pi (2)+2,
\end{equation*}%
\begin{equation*}
\left\vert \mathcal{BL}_{4}\right\vert =\left\vert \mathcal{MV}%
_{4}\right\vert +\left\vert \mathcal{BL}_{3}\right\vert +\left\vert \mathcal{%
BL}_{2}\right\vert =\pi (4)+\pi (3)+2\cdot \pi (2)+4,
\end{equation*}%
\begin{equation*}
\left\vert \mathcal{BL}_{5}\right\vert =\left\vert \mathcal{MV}%
_{5}\right\vert +\left\vert \mathcal{BL}_{4}\right\vert +\left\vert \mathcal{%
BL}_{3}\right\vert +\left\vert \mathcal{BL}_{2}\right\vert =
\end{equation*}%
\begin{equation*}
=\pi (5)+\pi (4)+2\cdot \pi (3)+4\cdot \pi (2)+8.
\end{equation*}
\end{theorem}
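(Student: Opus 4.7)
The plan is to verify both parts of the theorem by induction on $n \in \{2,3,4,5\}$, built on three ingredients: (a) Proposition \ref{Remark_4}, which lists the admissible ordinal-product shapes; (b) the fact, recalled just above the theorem, that for each $k \geq 2$ there is a unique MV-chain with $k$ elements up to isomorphism (namely the \L ukasiewicz chain $Id(\mathbb{Z}_{2^{k-1}})$); and (c) Remark \ref{Remark_3}(ii), which guarantees that an ordinal product of two MV-algebras is a BL-algebra that is not an MV-algebra, so the ``MV'' and ``non-trivial ordinal product'' cases are disjoint. The base case $n=2$ is immediate, since the only 2-element bounded lattice $\{0,1\}$ carries the Boolean (hence MV) structure, giving $|\mathcal{BL}_2|=|\mathcal{MV}_2|=1=\pi(2)+1$.

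For the structural claim (i) with $3 \leq n \leq 5$, I would show that every $L \in \mathcal{BL}_n \setminus \mathcal{MV}_n$ decomposes canonically as $L \cong \mathcal{L}_1 \boxtimes \mathcal{L}_2$, with $\mathcal{L}_1$ the MV-chain occupying the bottom of $L$ up to the first non-trivial idempotent (of some size $i \in \{2,\ldots,n-1\}$) and $\mathcal{L}_2$ the complementary BL-algebra above (of size $j=n-i+1 \geq 2$). For BL-chains this is the classical ordinal-sum decomposition into bounded Wajsberg hoops, which is rigid. For non-chain $L$ with $n \leq 5$, only $n=5$ is possible, and a direct inspection of 5-element distributive lattices combined with the $(prel)$ obstruction $(x \rightarrow y) \vee (y \rightarrow x) = 1$ (which fails whenever two incomparable atoms lie directly above $0$) leaves only the ``$C_1 \oplus B_2$'' shape of Example \ref{Example_3}, which is manifestly $W_2 \boxtimes Id(\mathbb{Z}_2 \times \mathbb{Z}_2)$.

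Combining the canonical decomposition with fact (a) yields, for each $i \in \{2,\ldots,n-1\}$, one choice for $\mathcal{L}_1$ and $|\mathcal{BL}_{n-i+1}|$ choices for $\mathcal{L}_2$, giving
\begin{equation*}
|\mathcal{BL}_n| \;=\; |\mathcal{MV}_n| \;+\; \sum_{j=2}^{n-1} |\mathcal{BL}_j|.
\end{equation*}
Substituting $|\mathcal{MV}_n| = \pi(n)+1$ from \cite{[FP; 22]} and unfolding this recursion in turn produces $\pi(2)+1$, $\pi(3)+\pi(2)+2$, $\pi(4)+\pi(3)+2\pi(2)+4$, and $\pi(5)+\pi(4)+2\pi(3)+4\pi(2)+8$, matching the stated formulas. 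The main obstacle is the uniqueness of the canonical decomposition, since any double counting would destroy the recursion: this rests on the rigidity (up to isomorphism) of the Wajsberg-hoop ordinal-sum decomposition of a BL-chain together with the elementary atom-count observation above ruling out every 5-element non-chain BL-algebra outside Example \ref{Example_3}.
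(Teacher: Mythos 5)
Your proposal arrives at the same counts as the paper, but it organizes the enumeration differently. The paper lists, for each $n$, \emph{all} ordinal products $\mathcal{L}_1\boxtimes\mathcal{L}_2$ with $\mathcal{L}_1$ an arbitrary BL-chain (not necessarily indecomposable), and then removes the duplicates created by associativity of $\boxtimes$ by hand — this is where its intermediate formula $\left\vert \mathcal{BL}_{4}\right\vert =\left\vert \mathcal{MV}_{4}\right\vert +1\cdot \left\vert \mathcal{BL}_{3}\right\vert +2\cdot \left\vert \mathcal{BL}_{2}\right\vert -1$ comes from. You instead normalize the decomposition by insisting that $\mathcal{L}_1$ be the \emph{first} Wajsberg component, i.e.\ the unique MV-chain sitting at the bottom of $L$, which kills the double counting at the source and yields the clean recursion $\left\vert \mathcal{BL}_{n}\right\vert =\left\vert \mathcal{MV}_{n}\right\vert +\sum_{j=2}^{n-1}\left\vert \mathcal{BL}_{j}\right\vert$; unfolding it reproduces the stated formulas. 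Your route also buys something the paper's proof leaves implicit: part (i) requires knowing that \emph{every} non-MV BL-algebra with $n\leq 5$ elements actually arises as an ordinal product, and you address this via the ordinal-sum decomposition of BL-chains into (bounded) Wajsberg hoops (cf.\ \cite{[NL; 03]}) together with a separate treatment of the non-chain case, whereas the paper only generates algebras and asserts the total.

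Two points need repair. First, your prelinearity obstruction is misstated: ``two incomparable atoms directly above $0$'' does not violate $(prel)$ in general — the four-element Boolean algebra $Id(\mathbb{Z}_2\times\mathbb{Z}_2)$ is exactly such a lattice and is an MV-algebra. The correct obstruction is a pair of incomparable elements $a,b$ with $a\vee b<1$: then $a\rightarrow b$ and $b\rightarrow a$ both lie below $a\vee b$, so their join cannot be $1$. This is what eliminates the five-element lattice $0<\{a,b\}<c<1$, while the lattice of Example \ref{Example_3} (where the incomparable pair joins to $1$) survives. Second, you assert that the surviving five-element non-chain lattice ``manifestly'' carries only the BL-structure $\mathcal{L}_1\boxtimes Id(\mathbb{Z}_2\times\mathbb{Z}_2)$; uniqueness of the monoid operation on that lattice is a finite but nontrivial verification (one must rule out, e.g., a non-idempotent product on the incomparable pair), and it should be carried out or cited rather than declared obvious. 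With these two fixes your argument is complete and, in my view, tighter than the one in the paper.
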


\begin{proof}
From Proposition \ref{Proposition_5} and Remark \ref{Remark_3}, we remark
that using the ordinal product of two BL-algebras we can generate only
BL-algebras which are not MV-algebras.

We generate all BL-algebras with $n$ elements ($2\leq n\leq 5)$ which are
not MV-algebras.

\textbf{Case }$n=2.$

We have obviously an only BL-algebra (up to an isomorphism) isomorphic with 
\begin{equation*}
(Id(\mathbb{Z}_{2}),\cap ,+,\otimes \rightarrow ,0=\{0\},1=\mathbb{Z}_{2}).
\end{equation*}%
In fact, this residuated lattice is a BL-chain and is the only MV-algebra
with $2$ elements. We deduce that 
\begin{equation*}
\left\vert \mathcal{MV}_{2}\right\vert =\left\vert \mathcal{BL}%
_{2}\right\vert =\pi (2)+1=1
\end{equation*}%
\begin{equation*}
\left\vert \mathcal{MV}_{2}(c)\right\vert =\left\vert \mathcal{BL}%
_{2}(c)\right\vert =1.
\end{equation*}%
\textbf{Case }$n=3.$

Using Proposition \ref{Remark_4}, to generate a BL-algebra with $3$ elements
as an ordinal product $\mathcal{L}_{1}\boxtimes \mathcal{L}_{2}$ of $\ $two
BL-algebras $\mathcal{L}_{1}$ and $\mathcal{L}_{2}$ we must consider:%
\begin{equation*}
\mathcal{L}_{1}\text{ a BL-chain with }2\text{ elements and }\mathcal{L}_{2}%
\text{ a BL-algebra with }2\text{ elements. }
\end{equation*}%
Since there is only one BL-algebra with $2$ elements and it is a chain, we
obtain the BL-algebra 
\begin{equation*}
Id(\mathbb{Z}_{2})\boxtimes Id(\mathbb{Z}_{2}),
\end{equation*}%
which is a chain.

We deduce that 
\begin{equation*}
\left\vert \mathcal{MV}_{3}\right\vert =\pi (3)+1\text{ and }\left\vert 
\mathcal{BL}_{3}\right\vert =\left\vert \mathcal{MV}_{3}\right\vert +1\cdot
\left\vert \mathcal{BL}_{2}\right\vert =\pi (3)+\pi (2)+2=2
\end{equation*}%
\begin{equation*}
\left\vert \mathcal{MV}_{3}(c)\right\vert =1\text{ and }\left\vert \mathcal{%
BL}_{3}(c)\right\vert =\left\vert \mathcal{MV}_{3}(c)\right\vert +1=1+1=2.
\end{equation*}%
We remark that $\left\vert \mathcal{BL}_{3}\right\vert =\left\vert \mathcal{%
MV}_{3}\right\vert +\left\vert \mathcal{BL}_{2}\right\vert .$

\textbf{Case }$n=4.$

Using Proposition \ref{Remark_4}, to generate a BL-algebra with $4$ elements
as the ordinal product $\mathcal{L}_{1}\boxtimes \mathcal{L}_{2}$ of \ two
Bl-algebras $\mathcal{L}_{1}$ and $\mathcal{L}_{2}$ we must consider:%
\begin{equation*}
\mathcal{L}_{1}\text{ is a BL-chain with }2\text{ elements and }\mathcal{L}%
_{2}\text{ is a BL-algebra with }3\text{ elements, }
\end{equation*}%
\begin{equation*}
\mathcal{L}_{1}\text{ is a BL-chain with }3\text{ elements and }\mathcal{L}%
_{2}\text{ is a BL-algebra with }2\text{ elements. }
\end{equation*}%
We obtain the following BL-algebras: 
\begin{equation*}
Id(\mathbb{Z}_{2})\boxtimes Id(\mathbb{Z}_{4})\text{ and }Id(\mathbb{Z}%
_{2})\boxtimes (Id(\mathbb{Z}_{2})\boxtimes Id(\mathbb{Z}_{2}))\text{ }
\end{equation*}%
and 
\begin{equation*}
Id(\mathbb{Z}_{4})\boxtimes Id(\mathbb{Z}_{2})\text{ and }(Id(\mathbb{Z}%
_{2})\boxtimes (Id(\mathbb{Z}_{2}))\boxtimes Id(\mathbb{Z}_{2}).
\end{equation*}

Since $\boxtimes $ is associative, we obtain 3 BL-algebras which are chains,
from Remark \ref{Remark_3}.

We deduce that%
\begin{equation*}
\left\vert \mathcal{MV}_{4}\right\vert =\pi (4)+1\text{ }
\end{equation*}
\begin{equation*}
\left\vert \mathcal{BL}_{4}\right\vert =\left\vert \mathcal{MV}%
_{4}\right\vert +1\cdot \left\vert \mathcal{BL}_{3}\right\vert +2\cdot
\left\vert \mathcal{BL}_{2}\right\vert -1=\pi (4)+\pi (3)+2\cdot \pi (2)+4=5
\end{equation*}%
\begin{equation*}
\left\vert \mathcal{MV}_{4}(c)\right\vert =1\text{ and }\left\vert \mathcal{%
BL}_{4}(c)\right\vert =\left\vert \mathcal{MV}_{3}(c)\right\vert +3=1+3=4.
\end{equation*}

We remark that $\left\vert \mathcal{BL}_{4}\right\vert =\left\vert \mathcal{%
MV}_{4}\right\vert +\left\vert \mathcal{BL}_{3}\right\vert +\left\vert 
\mathcal{BL}_{2}\right\vert .$

\textbf{Case }$n=5.$

To generate a BL-algebra with $5$ elements as the ordinal product $\mathcal{L%
}_{1}\boxtimes \mathcal{L}_{2}$ of $\ $two Bl-algebras $\mathcal{L}_{1}$ and 
$\mathcal{L}_{2}$ we must consider:%
\begin{equation*}
\mathcal{L}_{1}\text{ is a BL-chain with }2\text{ elements and }\mathcal{L}%
_{2}\text{ is a BL-algebra with }4\text{ elements, }
\end{equation*}%
\begin{equation*}
\mathcal{L}_{1}\text{ is a BL-chain with }4\text{ elements and }\mathcal{L}%
_{2}\text{ is a BL-algebra with }2\text{ elements. }
\end{equation*}%
\begin{equation*}
\mathcal{L}_{1}\text{ is a BL-chain with }3\text{ elements and }\mathcal{L}%
_{2}\text{ is a BL-algebra with }3\text{ elements,.}
\end{equation*}%
We obtain the following BL-algebras: 
\begin{eqnarray*}
&&Id(\mathbb{Z}_{2})\boxtimes Id(\mathbb{Z}_{8}),\text{ }Id(\mathbb{Z}%
_{2})\boxtimes Id(\mathbb{Z}_{2}\times \mathbb{Z}_{2}),\text{ }Id(\mathbb{Z}%
_{2})\boxtimes (Id(\mathbb{Z}_{2})\boxtimes Id(\mathbb{Z}_{4})), \\
&&Id(\mathbb{Z}_{2})\boxtimes \lbrack Id(\mathbb{Z}_{4})\boxtimes Id(\mathbb{%
Z}_{2})]\text{ and }Id(\mathbb{Z}_{2})\boxtimes \lbrack Id(\mathbb{Z}%
_{2})\boxtimes (Id(\mathbb{Z}_{2})\boxtimes Id(\mathbb{Z}_{2}))]
\end{eqnarray*}

and 
\begin{eqnarray*}
&&Id(\mathbb{Z}_{8})\boxtimes Id(\mathbb{Z}_{2}),[Id(\mathbb{Z}%
_{2})\boxtimes Id(\mathbb{Z}_{4})]\boxtimes Id(\mathbb{Z}_{2}), \\
&&[Id(\mathbb{Z}_{4})\boxtimes Id(\mathbb{Z}_{2})]\boxtimes Id(\mathbb{Z}%
_{2})\text{ and }[Id(\mathbb{Z}_{2})\boxtimes (Id(\mathbb{Z}_{2})\boxtimes
Id(\mathbb{Z}_{2}))]\boxtimes Id(\mathbb{Z}_{2})
\end{eqnarray*}

and 
\begin{eqnarray*}
&&Id(\mathbb{Z}_{4})\boxtimes Id(\mathbb{Z}_{4}),\text{ }[Id(\mathbb{Z}%
_{2})\boxtimes Id(\mathbb{Z}_{2})]\boxtimes \lbrack Id(\mathbb{Z}%
_{2}))\boxtimes Id(\mathbb{Z}_{2})] \\
&&Id(\mathbb{Z}_{4})\boxtimes \lbrack Id(\mathbb{Z}_{2})\boxtimes Id(\mathbb{%
Z}_{2})]\text{ and }[Id(\mathbb{Z}_{2})\boxtimes Id(\mathbb{Z}%
_{2})]\boxtimes Id(\mathbb{Z}_{4})
\end{eqnarray*}

Since $\boxtimes $ is associative, $Id(\mathbb{Z}_{2})\boxtimes \lbrack Id(%
\mathbb{Z}_{4})\boxtimes Id(\mathbb{Z}_{2})]=[Id(\mathbb{Z}_{2})\boxtimes Id(%
\mathbb{Z}_{4})]\boxtimes Id(\mathbb{Z}_{2}),$ $Id(\mathbb{Z}_{2})\boxtimes
\lbrack Id(\mathbb{Z}_{2})\boxtimes (Id(\mathbb{Z}_{2})\boxtimes Id(\mathbb{Z%
}_{2}))]$ $=$ $[Id(\mathbb{Z}_{2})\boxtimes (Id(\mathbb{Z}_{2})\boxtimes Id(%
\mathbb{Z}_{2}))]\boxtimes Id(\mathbb{Z}_{2})$ $=[Id(\mathbb{Z}%
_{2})\boxtimes Id(\mathbb{Z}_{2})]\boxtimes \lbrack Id(\mathbb{Z}%
_{2}))\boxtimes Id(\mathbb{Z}_{2})],$ \ $[Id(\mathbb{Z}_{4})\boxtimes Id(%
\mathbb{Z}_{2})]\boxtimes Id(\mathbb{Z}_{2})=Id(\mathbb{Z}_{4})\boxtimes
\lbrack Id(\mathbb{Z}_{2})\boxtimes Id(\mathbb{Z}_{2})],$ $Id(\mathbb{Z}%
_{2})\boxtimes (Id(\mathbb{Z}_{2})\boxtimes Id(\mathbb{Z}_{4}))=$ $[Id(%
\mathbb{Z}_{2})\boxtimes Id(\mathbb{Z}_{2})]\boxtimes Id(\mathbb{Z}_{4})$
and $Id(\mathbb{Z}_{2})\boxtimes (Id(\mathbb{Z}_{2})\boxtimes Id(\mathbb{Z}%
_{4}))=[Id(\mathbb{Z}_{2})\boxtimes Id(\mathbb{Z}_{2})]\boxtimes Id(\mathbb{Z%
}_{4})$

We obtain 8 BL-algebras of which 7 are chains, from Remark \ref{Remark_3}.

We deduce that 
\begin{equation*}
\left\vert \mathcal{MV}_{5}\right\vert =\pi (5)+1=1\text{ and }\left\vert 
\mathcal{BL}_{5}\right\vert =9=\left\vert \mathcal{MV}_{5}\right\vert
+\left\vert \mathcal{BL}_{4}\right\vert +\left\vert \mathcal{BL}%
_{3}\right\vert +\left\vert \mathcal{BL}_{2}\right\vert
\end{equation*}%
\begin{equation*}
\left\vert \mathcal{MV}_{5}(c)\right\vert =1\text{ and }\left\vert \mathcal{%
BL}_{5}(c)\right\vert =8.
\end{equation*}
\end{proof}

\textbf{Table 2} present a basic summary of the structure of BL-algebras $L$
with $2\leq n\leq 5$ elements:

\begin{equation*}
\text{\textbf{Table 2:}}
\end{equation*}

\textbf{\ }%
\begin{tabular}{lll}
$\left\vert L\right\vert \mathbf{=n}$ & \textbf{Nr of BL-alg } & \textbf{%
Structure } \\ 
$n=2$ & $1$ & $\left\{ Id(\mathbb{Z}_{2})\text{ (chain, MV)}\right. $ \\ 
$n=3$ & $2$ & $\left\{ 
\begin{array}{c}
Id(\mathbb{Z}_{4})\text{ (chain, MV)} \\ 
Id(\mathbb{Z}_{2})\boxtimes Id(\mathbb{Z}_{2})\text{ (chain)}%
\end{array}%
\right. $ \\ 
$n=4$ & $5$ & $\left\{ 
\begin{array}{c}
Id(\mathbb{Z}_{8})\text{ (chain, MV)} \\ 
Id(\mathbb{Z}_{2}\times \mathbb{Z}_{2})\text{ (MV)} \\ 
Id(\mathbb{Z}_{2})\boxtimes Id(\mathbb{Z}_{4})\text{ (chain)} \\ 
Id(\mathbb{Z}_{4})\boxtimes Id(\mathbb{Z}_{2})\text{ \ (chain)} \\ 
Id(\mathbb{Z}_{2})\boxtimes (Id(\mathbb{Z}_{2})\boxtimes Id(\mathbb{Z}_{2}))%
\text{ (chain)}%
\end{array}%
\right. $ \\ 
$n=5$ & $9$ & $\left\{ 
\begin{array}{c}
Id(\mathbb{Z}_{16})\text{ (chain, MV)} \\ 
Id(\mathbb{Z}_{2})\boxtimes Id(\mathbb{Z}_{8})\text{ (chain)} \\ 
Id(\mathbb{Z}_{2})\boxtimes Id(\mathbb{Z}_{2}\times \mathbb{Z}_{2}) \\ 
Id(\mathbb{Z}_{2})\boxtimes (Id(\mathbb{Z}_{2})\boxtimes Id(\mathbb{Z}_{4}))%
\text{ (chain)} \\ 
Id(\mathbb{Z}_{2})\boxtimes (Id(\mathbb{Z}_{4})\boxtimes Id(\mathbb{Z}_{2}))%
\text{ (chain)} \\ 
Id(\mathbb{Z}_{2})\boxtimes (Id(\mathbb{Z}_{2})\boxtimes (Id(\mathbb{Z}%
_{2})\boxtimes Id(\mathbb{Z}_{2})))\text{ (chain)} \\ 
Id(\mathbb{Z}_{8})\boxtimes Id(\mathbb{Z}_{2})\text{ (chain)} \\ 
(Id(\mathbb{Z}_{4})\boxtimes Id(\mathbb{Z}_{2}))\boxtimes Id(\mathbb{Z}_{2})%
\text{ (chain)} \\ 
Id(\mathbb{Z}_{4})\boxtimes Id(\mathbb{Z}_{4})\text{ (chain)}%
\end{array}%
\right. $%
\end{tabular}

Finally, \textbf{Table 3 }present a summary for the number of MV-algebras,
MV-chains, BL-algebras and BL-chains with $n\leq 5$ elements:

\begin{equation*}
\text{\textbf{Table 3}}
\end{equation*}

\medskip

\textbf{\ \ }%
\begin{tabular}{lllll}
& $n=2$ & $n=3$ & $n=4$ & $n=5$ \\ 
MV-algebras & $1$ & $1$ & $2$ & $1$ \\ 
MV-chains & $1$ & $1$ & $1$ & $1$ \\ 
BL-algebras & $1$ & $2$ & $5$ & $9$ \\ 
BL-chains & $1$ & $2$ & $4$ & $8$%
\end{tabular}%
. \medskip

Cristina Flaut

{\small Faculty of Mathematics and Computer Science, Ovidius University,}

{\small Bd. Mamaia 124, 900527, Constan\c{t}a, Rom\^{a}nia,}

{\small \ http://www.univ-ovidius.ro/math/}

{\small e-mail: cflaut@univ-ovidius.ro; cristina\_flaut@yahoo.com}

\bigskip

Dana Piciu

{\small Faculty of \ Science, University of Craiova, }

{\small A.I. Cuza Street, 13, 200585, Craiova, Romania,}

{\small http://www.math.ucv.ro/dep\_mate/}

{\small e-mail: dana.piciu@edu.ucv.ro, piciudanamarina@yahoo.com}

\end{document}